\newcommand{\langue}{anglais}	
	\newcommand{\theoremenom}{Théorème}
	\newcommand{\propositionnom}{Proposition}
	\newcommand{\lemmenom}{Lemme}
	\newcommand{\corollairenom}{Corollaire}
	\newcommand{\definitionnom}{Définition}
	\newcommand{\remarquenom}{Remarque}
	\newcommand{\exemplenom}{Exemple}
	\newcommand{\conjecturenom}{Conjecture}
	\newcommand{\theoremenom}{Theorem}
	\newcommand{\propositionnom}{Proposition}
	\newcommand{\lemmenom}{Lemma}
	\newcommand{\corollairenom}{Corollary}
	\newcommand{\definitionnom}{Definition}
	\newcommand{\remarquenom}{Remark}
	\newcommand{\exemplenom}{Example}
	\newcommand{\conjecturenom}{Conjecture}
\newtheorem{theoreme}{\theoremenom}
\newtheorem{proposition}[theoreme]{\propositionnom}
\newtheorem{lemme}[theoreme]{\lemmenom}
\newtheorem{corollaire}[theoreme]{\corollairenom}
\newtheorem{definition}[theoreme]{\definitionnom}
\newtheorem*{remarque}{\remarquenom}
\newtheorem*{conjecture}{\conjecturenom}
\def\cleartheorem#1{%
    \expandafter\let\csname#1\endcsname\relax
    \expandafter\let\csname c@#1\endcsname\relax
}
\newcommand{\compteurThm}{1}
\newcounter{annexe}
\newcommand{\C}{\mathbb{C}}
\newcommand{\R}{\mathbb{R}}
\newcommand{\N}{\mathbb{N}}
\newcommand{\supp}{\text{supp}}
\begin{document}

\pagestyle{empty} 


\renewcommand*{\thefootnote}{\fnsymbol{footnote}}
\title{Sufficient conditions yielding the Rayleigh Conjecture for the clamped plate}
\author{
Roméo Leylekian
\footnote{Aix-Marseille Université, CNRS, I2M, Marseille, France - \textbf{email:} romeo.leylekian@univ-amu.fr}
}
\date{} 
\maketitle


\begin{abstract}
The Rayleigh Conjecture for the bilaplacian consists in showing that the clamped plate with least principal eigenvalue is the ball. The conjecture has been shown to hold in 1995 by Nadirashvili in dimension $2$ and by Ashbaugh and Benguria in dimension $3$. Since then, the conjecture remains open in dimension $d\geq 4$. In this paper, we contribute to answer this question, and show that the conjecture is true in any dimension as long as some special condition holds on the principal eigenfunction of an optimal shape. This condition regards the mean value of the eigenfunction, asking it to be in some sense minimal.
This main result is based on an order reduction principle allowing to convert the initial fourth order linear problem into a second order affine problem, for which the classical machinery of shape optimization and elliptic theory is available.
The order reduction principle turns out to be a general tool. In particular, it is used to derive another sufficient condition for the conjecture to hold, which is a second main result. This condition requires the Laplacian of the optimal eigenfunction to have constant normal derivative on the boundary.
Besides our main two results, we detail shape derivation tools allowing to prove simplicity for the principal eigenvalue of an optimal shape and to derive optimality conditions. Finally, because our first result involves the principal eigenfunction of a ball, we are led to compute it explicitly. 
\end{abstract}

{
\small	
\textbf{\textit{Keywords: }} Bilaplacian; Eigenvalue problem; Rayleigh Conjecture; Shape optimization.
}



\pagestyle{plain} 


\section{Introduction}

In 1877, at the same time he was formulating his famous conjecture regarding fixed membranes, Rayleigh stated that the principal frequency of a clamped plate should be minimal when the plate is circular. Let us explain more precisely the terms of this claim. The principal frequency of a clamped plate involves the eigenvalue problem related to the bilaplacian with Dirichlet boundary conditions (also refered to as Dirichlet bilaplacian), which is the following eigenvalue problem.
\begin{equation}\label{eq:equation aux vp}
\left\{\begin{array}{rcll}
\Delta^2 u & = & \Gamma u & in \quad\Omega, \\
u & = & 0 & on \quad\Omega,\\
\partial_n u & = & 0 & on \quad\Omega.
\end{array}\right.
\end{equation}
Here $\Omega\subseteq\R^d$ ($d\in\N^*$) stands for an arbitrary bounded open set, $u\in H_0^2(\Omega)$, $\Gamma$ is a real number, and $\partial_n=\vec{n}\cdot\nabla$ is the partial derivative in the direction of the outward normal unit vector $\vec{n}$. It turns out that problem (\ref{eq:equation aux vp}) admits countably many (nontrivial) eigencouples $(u,\Gamma)$, and that the sequence of eigenvalues is positive and grows up to infinity. This occurs since the resolvent of the Dirichlet bilaplacian is compact positive self-adjoint when seen as an operator acting on $L^2(\Omega)$ (see \cite{gazzola-grunau-sweers} for a collection of general facts regarding the bilaplacian and, more generally, polyharmonic operators). The principal eigenvalue of the clamped plate is nothing else but the lowest of these eigenvalues, that we will denote $\Gamma(\Omega)$ in the rest of the document in order to emphasize its dependance on the open set $\Omega$. As for any eigenvalue of a self-adjoint operator, $\Gamma(\Omega)$ admits a variational characterization, which is the following:

\begin{equation}\label{eq:formulation variationnelle}
\Gamma(\Omega)=\min_{\substack{u\in H_0^2(\Omega)\\ u\neq 0}}\frac{\int_\Omega(\Delta u)^2}{\int_\Omega u^2}.
\end{equation}

Initially stated in the context of subsets of $\R^2$ only, the Rayleigh Conjecture deals with the problem of determining the open set with least principal eigenvalue among all open sets having same measure. As its counterpart for the Dirichlet Laplacian, the conjecture claims that such a set exists, is \enquote{almost} unique, and is given by the Euclidean ball fitting the volume constraint. Note that plain uniqueness does not hold since $\Gamma(\Omega)$ is invariant under isometries of $\Omega$ and under removing a set of zero $H^2$-capacity from $\Omega$ (see sections 3.3 and 3.8.1 of \cite{henrot-pierre} for the definition of capacity). In other words, if $|.|$ denotes the $d$-dimensional Lebesgue measure,

\begin{conjecture}
Let $\Omega$ be a bounded open subset of\/ $\R^d$ and $B$ a ball such that $|B|=|\Omega|$. Then,
\begin{equation}\label{eq:conjecture}
\Gamma(\Omega)\geq\Gamma(B).
\end{equation}
Moreover there is equality if and only if\/ $\Omega$ is a ball (up to a set of zero $H^2$-capacity).
\end{conjecture}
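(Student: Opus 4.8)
The statement to be established is the Rayleigh Conjecture itself, which is open for $d\geq 4$; accordingly the plan is not to prove it outright but to reduce it, via the order reduction principle advertised in the abstract, to one of two verifiable conditions on an optimal eigenfunction, following in spirit the symmetrization strategy of Nadirashvili and of Ashbaugh--Benguria but removing its dimensional ceiling. First I would fix a (near-)optimal domain $\Omega$ with principal eigenfunction $u\in H_0^2(\Omega)$, normalised by $\int_\Omega u^2=1$, and decompose it by the sign of $u$: $\Omega^+=\{u>0\}$, $\Omega^-=\{u<0\}$, with positive and negative parts $u^\pm$. Introducing $v=-\Delta u\in L^2(\Omega)$ turns the fourth order equation into the coupled second order system $-\Delta u=v$, $-\Delta v=\Gamma u$ with $u=0$ on $\partial\Omega$; the defining feature of the clamped plate, $\partial_n u=0$, is \emph{not} visible on $u$ alone and must be carried along as an integral constraint linking $v$ to the data. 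The key point of the reduction is that once the mean value $\int_\Omega u$ (equivalently the balance between $\int_{\Omega^+}u$ and $\int_{\Omega^-}|u|$) is \emph{frozen}, the problem for $v$ becomes an affine Poisson-type problem — a linear second order equation with a source term determined by the eigenfunction — to which Talenti's comparison principle and the classical shape-calculus machinery apply.

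Next I would symmetrize: replace $\Omega^+$ by the ball $B^+$ with $|B^+|=|\Omega^+|$ and $\Omega^-$ by $B^-$ with $|B^-|=|\Omega^-|$, and use Talenti-type rearrangement inequalities on the affine auxiliary problems to show the Rayleigh quotient does not increase, producing a ``two-ball'' competitor. This collapses the conjecture to a one-parameter inequality — a comparison of ratios of Bessel functions $J_\nu$ and $I_\nu$ — indexed by the volume fraction $t=|B^+|/|\Omega|$; here one needs the principal eigenfunction of a ball \emph{explicitly} (a radial combination of $J_\nu$ and $I_\nu$), which is why the excerpt promises that computation. For $d=2,3$ the resulting inequality is elementary and $t$ is essentially unconstrained; for $d\geq 4$ its validity is exactly the unknown point, and this is where the first sufficient condition enters: the hypothesis that $\int_\Omega u$ is minimal, in the precise sense stated later, pins $t$ into the range where the Bessel inequality can be verified — equivalently it forces the mass of the negative part $u^-$ to be small enough.

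Before any of this, ``an optimal shape'' must exist and have simple principal eigenvalue. I would obtain existence in the standard $H_0^2$ concentration–compactness / Buttazzo--Dal Maso framework, and simplicity by a shape-perturbation argument: were $\Gamma(\Omega)$ multiple, a suitable normal deformation of $\partial\Omega$ would strictly decrease it, contradicting optimality. Simplicity makes $\Omega\mapsto\Gamma(\Omega)$ shape-differentiable, with derivative $-\int_{\partial\Omega}(\Delta u)^2\,V\cdot n$ along a deformation field $V$; under the volume constraint the Euler--Lagrange condition is therefore that $(\Delta u)^2$ be constant on $\partial\Omega$. Adjoining to this the second sufficient condition — $\partial_n\Delta u$ constant on $\partial\Omega$ — gives an overdetermined system for the pair $(u,\Delta u)$ on $\Omega$ which, by a Serrin-type moving-plane argument or an $P$-function/integral identity, can hold only when $\Omega$ is a ball; the rigidity (equality) part of the conjecture then comes from tracing equality through the symmetrization inequalities.

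The principal obstacle is the structural one that has kept the problem open for three decades: the bilaplacian admits no maximum principle, so $u$ need not have a sign, and the two-ball reduction genuinely loses information unless the negative part is controlled. Both hypotheses in this paper are, in effect, quantitative surrogates for the unavailable fact ``$u\geq 0$'': the mean-value condition caps the mass that can live on $\{u<0\}$, while the constant-normal-derivative condition is precisely the optimality signature a true minimizer ought to carry. Establishing unconditionally that one of these holds for a genuine optimal domain is the gap that remains; the contribution here is that the order reduction principle is what renders both conditions amenable to second order elliptic theory in the first place.
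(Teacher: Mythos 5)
You are right that no unconditional proof of the Conjecture is possible here --- the paper states it as a conjecture precisely because it remains open for $d\geq 4$, and the body of the paper only establishes conditional results (Theorems~\ref{thm:faber-krahn moyenne}, \ref{thm:surdetermine}, \ref{thm:condition d'optimalité}). Where your proposal goes astray is in what it calls the order reduction. You set $v=-\Delta u$ and work with the system $-\Delta u=v$, $-\Delta v=\Gamma u$, then split $\Omega$ into the nodal sets $\Omega^\pm$ and symmetrize each into balls $B^\pm$, arriving at a ``two-ball'' Bessel-ratio inequality indexed by $t=|B^+|/|\Omega|$. That is not the reduction in this paper; it is precisely the classical Talenti--Nadirashvili--Ashbaugh--Benguria scheme, and the paper itself recalls (citing Ashbaugh--Laugesen) that this two-ball route provably cannot close the conjecture once $d\geq 4$. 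Reproducing it does not account for why the mean-value or overdetermined hypotheses would ``pin $t$ into the verifiable range.''

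The paper's actual order reduction is different and is what buys the extra leverage: one factors $\Delta^2-\mu=(\Delta-\sqrt{\mu})(\Delta+\sqrt{\mu})$, sets $y=(\Delta/\sqrt{\mu}+1)u$, and then subtracts off the harmonic extension $g_u$ of the boundary trace $\Delta u/\sqrt{\mu}$ to get $z_u:=y-g_u\in H_0^1(\Omega)\cap H^2(\Omega)$ solving the affine problem $\Delta z_u=\sqrt{\mu}(z_u+g_u)$. The variational characterization (\ref{eq:formule variationnelle zu}) of $1/\sqrt{\mu}$ in terms of $z_u$ is what replaces the two-ball inequality. The crucial point, which your decomposition misses, is that the optimality condition of Theorem~\ref{thm:condition d'optimalité} makes $\Delta u$ (hence $g_u$) \emph{constant} on the connected boundary, so $g_u>0$ in $\Omega$ and the strong maximum principle for $\Delta-\sqrt{\mu}$ gives $z_u<0$. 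One then Schwarz-symmetrizes the single sign-definite function $z_u$ onto the whole ball $B$ --- there is no $\Omega^+/\Omega^-$ splitting and no two-ball competitor at all --- and hypothesis (\ref{hyp:moyennes}) enters only to compare $\int_B z_u^\#$ with $\int_B z_{u_B}$. Two further discrepancies: the paper \emph{assumes} existence of a regular optimal shape (it explicitly flags existence as open), whereas you propose to derive it from Buttazzo--Dal Maso/concentration--compactness; and the connectedness of $\partial\Omega$ in hypothesis (\ref{hyp:rg}) is essential to get $\Delta u$ constant (not just piecewise constant) on the boundary, a point your sketch omits. Your remarks on shape-differentiability, the optimality condition $(\Delta u)^2$ constant, Serrin rigidity for the overdetermined case, and the hypotheses as surrogates for one-sign are all in line with the paper; the gap is the central mechanism.
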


After its publication in 1877, one of the first serious results on the conjecture is due to Szegö \cite{szego}, and states, based on symmetrisation arguments, that, as soon as the eigenfunction associated with the first eigenvalue on a set $\Omega$ is of fixed sign, the Faber-Krahn type inequality (\ref{eq:conjecture}) holds. However, one of the main challenges when working with fourth and higher order elliptic operators is the vacuity of the maximum principle in general for arbitrary domains. This means that, unlike the Dirichlet Laplacian, the one-sign property of the principal eigenfunction is no longer guaranteed as a consequence of the non-applicability of Krein-Rutmann Theorem. Indeed, the first - and maybe the most famous - example of domains in which this one-sign property fails was found to be annuli with small inner radius in 1952 \cite{duffin-shaffer, coffman-duffin-shaffer}. On the contrary, balls do enjoy the one-sign property (see Proposition \ref{prop:fonction propre boule}, in which the principal eigenvalue of a ball and the associated eigenfunction are computed). This situation is troublesome in the sense that, at first glance, it deprives us of our principal tool in shape optimization, which is symmetrisation.

Nevertheless, using perturbation techniques, Mohr \cite{mohr} showed in 1975 that any planar optimal regular shape, if it exists, has to be the ball. Such strategies, based on optimality conditions, are common in shape optimization. They have proved to work also for the buckling problem \cite{willms-weinberger}. In both clamped and buckling  problems, for distinct reasons, the approach strongly relies on the planeness of the shapes involved. The result of Mohr was finally outshined by a series of papers beginning with \cite{talenti76} in 1976, in which Talenti proved its famous comparison principle. An astute adaptation of this principle allowed him to find in 1981 a lower bound on the optimal eigenvalue depending on the dimension (see \cite{talenti81}). Following this strategy, Nadirashvili solved the conjecture in $\R^2$ in 1995 in \cite{nadirashvili}. Subsequently, still in the wake of Talenti's approach, Ashbaugh and Benguria proved the conjecture in $\R^2$ and $\R^3$ in 1995 (see \cite{ashbaugh-benguria}). Furthermore, in 1996, Ashbaugh and Laugesen \cite{ashbaugh-laugesen} completely solved Talenti's \enquote{two-ball problem} (see \cite[equation (26)]{ashbaugh-benguria} for details) in any dimension. As a result, they showed on the one hand that the plain approach of Talenti could not answer the Rayleigh Conjecture when $d\geq4$, but, on the other hand, gave a very precise lower bound on the optimal eigenvalue. Since then, up to our knowledge, no significant breakthrough has been performed regarding the actual optimal shape nor the actual optimal eigenvalue in high dimension. Let us however mention our work \cite{leylekian}, in which we obtain a surprising sufficient condition for the Rayleigh Conjecture to hold, based on a refinement of Talenti's approach. As a final word, we cite the interesting papers of Krist\'{a}ly \cite{kristaly20,kristaly22} dealing with the conjecture in non-Euclidean setting.

\newcommand{\reg}{$C^4$}
The goal of the present document is to contribute for a better understanding of the terms of validity of the Rayleigh Conjecture. More precisely, under existence and regularity of an optimal shape, we will show that the conjecture is true in any dimension whenever the principal eigenfunction satisfies some special condition. This will be explained in the next lines. First, we need to assume that there exists a solution with \textbf{\reg~regular connected boundary} to the problem
\begin{equation}\label{eq:pb}
\min\{\Gamma(\Omega):\Omega\subseteq\R^d\text{ bounded open set, }|\Omega|=c\},
\end{equation}
where $c$ is a fixed positive real number. Here, we recall that the question of the existence of an optimal shape is still open (see however the recent work \cite{stollenwerk} dealing with this issue for domains contained in a given large box). In the rest of the document, we will denote $\Omega$ a $C^4$ regular solution to (\ref{eq:pb}). The regularity assumption on $\Omega$ will be used for invoking shape derivation. Indeed, it guarantees that the eigenfunctions are $H^4(\Omega)$ (see \cite[Theorem 2.20]{gazzola-grunau-sweers}). However, besides $H^4$ regularity, at some point we will need more regularity for the principal eigenfunction. The $L^p$ regularity theory (see again \cite[Theorem 2.20]{gazzola-grunau-sweers}) will answer this need by providing $W^{4,p}(\Omega)$ regularity, and then (thanks to Sobolev emebddings) $C^{3,\alpha}(\overline{\Omega})$ regularity for the eigenfunction. On the other hand, the assumption on the geometry of the boundary is technical as we shall see in the proof of our main theorem. We stress the properties of regularity and geometry enjoyed by $\Omega$ by stating the assumption
\begin{equation}\label{hyp:rg}
\tag{RG}
\Omega\textit{ is \reg and\/ }\partial\Omega \textit{ is connected}.
\end{equation}

Apart from (\ref{hyp:rg}), we will need another special assumption to run our proof. This condition asks for the mean value $|\int_\Omega u|$ of the first $L^2$-normalised eigenfunction $u$ in $\Omega$ to be minimal. Then, the main conclusion of the present document is the theorem stated below.

\begin{theoreme}\label{thm:faber-krahn moyenne}
Let $\Omega$ be an optimal shape for problem (\ref{eq:pb}) satisfying (\ref{hyp:rg}) and $B$ a ball such that $|\Omega|=|B|$. Let $u$ be a first $L^2$-normalised eigenfunction in $\Omega$ and $u_B$ a first $L^2$-normalised eigenfunction in $B$. Then,
\begin{equation}\label{hyp:moyennes}\tag{M}
\left|\int_\Omega u\right|\geq\left|\int_B u_B\right|.
\end{equation}
Moreover, (\ref{hyp:moyennes}) holds with equality if and only if\/ $\Omega=B$ (up to a translation).
\end{theoreme}

\begin{remarque}
Roughly speaking, Theorem \ref{thm:faber-krahn moyenne} tells that an optimal shape of which the mean of the principal eigenfunction is minimal is a ball. Therefore, one is led to wonder if the minimality of the $H_0^2$ norm of an eigenfunction implies the minimality of its mean. Among others, this question will be addressed in section \ref{sec:corollaires}.
\end{remarque}

The proof of Theorem \ref{thm:faber-krahn moyenne} is based on a procedure that we shall call \enquote{order reduction principle}. Such a procedure appears to be new, at least in the present context, although ensuing from recurrent ideas (see for instance \cite[section 1.1.3]{gazzola-grunau-sweers} and \cite[equation (2.3)]{antunes-buoso-freitas}). In essence, the order reduction principle allows to turn the fourth order eigenvalue problem (\ref{eq:equation aux vp}) into a second order affine problem, for which a more sophisticated machinery is available. In particular, it becomes possible to use symmetrisation techniques, which are the other main ingredient for proving Theorem \ref{thm:faber-krahn moyenne}. However, we would like to emphasize that the order reduction principle paves the way for the utilization of many other tools coming from the field of second order elliptic operators. In order to illustrate this fact, we derive another main result, which is based on the theory of overdetermined problems stemming from the historical \cite{serrin}. Before, let us simply recall that very little is known in general on overdetermined problems of fourth order, \cite{bennett,payne-schaefer,dalmasso,  barkatou} being the almost exhaustive list of results.

\begin{theoreme}\label{thm:surdetermine}
Let $\Omega$ be an optimal shape for problem (\ref{eq:pb}) satisfying (\ref{hyp:rg}). Let $u$ be a first eigenfunction on $\Omega$ such that $\partial_n\Delta u$ is constant on $\partial\Omega$. Then, $\Omega$ is a ball.
\end{theoreme}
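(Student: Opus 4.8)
The plan is to exploit the order reduction principle to convert the fourth order eigenvalue equation on the optimal shape $\Omega$ into a second order boundary value problem, and then to recognize the hypothesis ``$\partial_n\Delta u$ constant on $\partial\Omega$'' as an overdetermination that forces $\Omega$ to be a ball by Serrin's method of moving planes. Concretely, write $v=-\Delta u$ (up to sign conventions), so that $v$ solves $-\Delta v=\Gamma(\Omega)u$ in $\Omega$ together with the data coming from $u\in H_0^2(\Omega)$. Since $u=\partial_n u=0$ on $\partial\Omega$, one expects (this is the content of the order reduction principle as used for Theorem \ref{thm:faber-krahn moyenne}) that $u$ itself can be recovered as the solution of a Dirichlet problem with right-hand side involving $v$, and that the pair becomes, after eliminating one unknown, a single affine second order equation for $v$ with $v=c_0$ constant on $\partial\Omega$ for some constant determined by $\Omega$; the extra condition $\partial_n v=\text{const}$ on $\partial\Omega$ is then exactly a Serrin-type overdetermined condition.

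First I would make the reduction precise: using the optimality of $\Omega$ and assumption (\ref{hyp:rg}), invoke the shape-derivative optimality conditions established earlier in the paper (simplicity of $\Gamma(\Omega)$ and the boundary condition they yield) to pin down the boundary behaviour of $\Delta u$ — in particular that $\Delta u$ is constant on $\partial\Omega$, equal to some $\kappa$ depending only on $\Omega$ through the volume constraint. This is the step where the earlier machinery of the paper is essential, and I would cite it rather than reprove it. Second, with $w:=\Delta u-\kappa$ (or the appropriate normalization), I would check that $w$ satisfies a clean second order problem: $-\Delta w=\Gamma(\Omega)u$ in $\Omega$, $w=0$ on $\partial\Omega$, and, by hypothesis together with the known value of $\kappa$, $\partial_n w=\partial_n\Delta u=\text{const}$ on $\partial\Omega$. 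Third, I would regularize as needed — the $W^{4,p}$/$C^{3,\alpha}$ regularity quoted in the introduction gives $w\in C^{1,\alpha}(\overline\Omega)$, and $\partial\Omega$ being $C^4$ and connected lets the moving planes argument run.

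Fourth, I would feed this into an overdetermined-problem argument in the spirit of \cite{serrin}: the system ``$w$ solves a second order elliptic equation in $\Omega$ with $w=0$ and $\partial_n w$ constant on $\partial\Omega$'' has, by the method of moving planes (or by the $P$-function / Weinberger approach adapted to the sign-changing, non-autonomous right-hand side), only radially symmetric solutions, whence $\Omega$ is a ball. The connectedness of $\partial\Omega$ from (\ref{hyp:rg}) ensures $\Omega$ is connected so that a single ball results rather than a union. The main obstacle, and the place I would spend the most care, is that Serrin's theorem in its classical form concerns a semilinear autonomous equation $-\Delta w=f(w)$ with $w>0$; here the right-hand side is $\Gamma(\Omega)u$, where $u$ is a \emph{different} function which moreover changes sign in general, so the hypotheses of the classical moving-planes lemma do not literally apply. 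I expect the resolution to require either (i) simultaneously reflecting the pair $(u,w)$ and using the coupled second order system (each equation linear, the coupling monotone) so that a vectorial maximum principle / Serrin corner lemma still applies, or (ii) re-expressing $u$ in terms of $w$ via the Green operator to obtain a genuinely nonlocal but still reflection-compatible scalar problem. Handling the sign changes of $u$ in the moving-planes step — in particular getting the strong maximum principle and Hopf/Serrin corner lemma to apply to the coupled system near the boundary — is the crux; once symmetry in every direction is obtained, $\Omega=B$ follows immediately.
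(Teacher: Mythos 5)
Your high-level plan is in the right direction and correctly identifies the three ingredients: invoke the optimality/criticality condition (Theorem~\ref{thm:condition d'optimalité}) to pin down $\Delta u$ as a constant on the connected $\partial\Omega$, pass to a second-order problem via the order reduction principle, and recognize the hypothesis $\partial_n\Delta u=\mathrm{const}$ as a Serrin-type overdetermination. So the skeleton is essentially that of the paper's proof.

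However, there is a genuine gap at the very step you single out as the crux, and it stems from using the wrong reduction. You set $v=-\Delta u$ (equivalently $w=\Delta u-\kappa$) and land on the coupled pair $-\Delta u = v$, $-\Delta v = \Gamma(\Omega)u$, whose right-hand side still involves the sign-changing $u$. You then correctly note that Serrin's theorem does not literally apply, and you sketch remedies (a vectorial moving-planes argument for the pair $(u,w)$, or a nonlocal reformulation) that you do not carry out and that would require nontrivial extra work. None of this is needed: the content of Proposition~\ref{prop:principe de réduction d'ordre} is precisely that the factorization $\Delta^2-\mu=(\Delta-\sqrt\mu)(\Delta+\sqrt\mu)$ produces a \emph{single} scalar unknown $z_u=\tfrac{\Delta}{\sqrt\mu}u+u-g_u$ solving the decoupled affine equation $\Delta z_u=\sqrt\mu\,(z_u+g_u)$ in $\Omega$, $z_u=0$ on $\partial\Omega$, where $g_u$ is the harmonic lift of $\tfrac{\Delta}{\sqrt\mu}u\big|_{\partial\Omega}$. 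Once $\Delta u$ is constant on $\partial\Omega$ (by criticality and connectedness of $\partial\Omega$), $g_u$ is a positive constant, so $z_u<0$ by the last part of Proposition~\ref{prop:principe de réduction d'ordre}, and $\partial_n z_u=\tfrac{1}{\sqrt\mu}\partial_n\Delta u$ (since $u=\partial_n u=0$ and $g_u$ is constant) is constant on $\partial\Omega$ by hypothesis. Writing $w=-z_u>0$ gives $-\Delta w = \sqrt\mu\, w-\sqrt\mu\, g_u$, i.e. a genuine autonomous semilinear (in fact affine) equation $-\Delta w=f(w)$ with $f$ Lipschitz, $w>0$, $w=0$, $\partial_n w=\mathrm{const}$ on $\partial\Omega$. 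This is exactly the setting of Serrin's Theorem~2 in \cite{serrin}; no coupled system, no vectorial maximum principle, and no special handling of the sign changes of $u$ is required. In short, what you correctly flag as the main obstacle is an artifact of the decomposition $v=-\Delta u$, and is removed by the specific combination $z_u=\tfrac{\Delta}{\sqrt\mu}u+u-g_u$ built into the order reduction principle.
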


Actually, the proofs of Theorem \ref{thm:faber-krahn moyenne} and Theorem \ref{thm:surdetermine} do not appeal to the order reduction principle as a standalone. Indeed, to reveal its potential, the order reduction principle needs to thrive on the optimality condition satisfied by an optimal shape $\Omega$. Such an optimality condition shall be derived only when the eigenvalue $\Gamma(\Omega)$ is simple. Even if the question of simplicity of the optimal eigenvalue had already been tackled in \cite{mohr}, one of the main results of the present work is to propose a thorough proof of this fact and to derive the subsequent optimality condition, which is precised in the next theorem.

\begin{restatable}{retheoreme}{simplicite}\label{thm:condition d'optimalité}
Let $\Omega$ be a \reg\/ open set solving (\ref{eq:pb}). Then, $\Gamma(\Omega)$ is simple. Moreover, if $u$ denotes an $L^2$-normalised eigenfunction associated with $\Gamma(\Omega)$, $\Delta u$ is a.e. constant equal to $\pm\alpha$ on any connected component of $\partial\Omega$, where
\begin{equation*}
\alpha:=\sqrt{\frac{4\Gamma(\Omega)}{d|\Omega|}}.
\end{equation*}
\end{restatable}

In the remainder of this document we will detail the proofs of Theorem \ref{thm:faber-krahn moyenne}, Theorem \ref{thm:surdetermine} and Theorem \ref{thm:condition d'optimalité}. In section \ref{sec:order reduction}, we present our main tool, which is the order reduction principle, roughly explained in the previous lines. Section \ref{sec:derivation} gathers some results about derivation of simple and multiple eigenvalues of the Dirichlet bilaplacian. Using these tools, in section \ref{sec:simplicity}, we prove Theorem~\ref{thm:condition d'optimalité}. Section \ref{sec:theorem} is devoted to the proofs of Theorem~\ref{thm:faber-krahn moyenne} and Theorem~\ref{thm:surdetermine}. Section \ref{sec:corollaires} discusses two consequences of Theorem \ref{thm:faber-krahn moyenne}.

\section{Order reduction principle}\label{sec:order reduction}

The order reduction principle, from which arise Theorem \ref{thm:faber-krahn moyenne} and Theorem \ref{thm:surdetermine}, is an algebraic trick leading to an \enquote{eigenvalue problem} involving a differential operator of order lower than the bilaplacian, that is, the Laplacian. The counterpart to the reduction of the order is that the \enquote{eigenvalue problem} is not linear anymore. The precise statement is encapsulated in the next proposition.

\begin{proposition}\label{prop:principe de réduction d'ordre}
Let $\Omega$ be a $C^4$ bounded open set, and $u\in H_0^2(\Omega)$ an eigenfunction of the bilaplacian in $\Omega$ associated with an eigenvalue $\mu$, so that $\Delta u$ has trace in $H^{\frac{3}{2}}(\partial\Omega)$. Finally, let $g_u$ satisfy
$$
\left\{
\begin{array}{rcll}
\Delta g_u & = & 0 & in \quad\Omega,\\
g_u & = & \frac{\Delta}{\sqrt{\mu}}u & on \quad\partial\Omega.\\
\end{array}
\right.
$$
Then, the function $z_u:=\frac{\Delta}{\sqrt{\mu}}u+u-g_u$ solves the equation
\begin{equation}\label{eq:pb simplifie}
\left\{
\begin{array}{rcll}
\Delta z_u & = & \sqrt{\mu}(z_u + g_u) & in \quad\Omega,\\
z_u & = & 0 & on \quad\partial\Omega.\\
\end{array}
\right.
\end{equation}
In particular, $z_u$ solves the following problem, the value of which is $\frac{1}{\sqrt{\mu}}$:
\begin{equation}\label{eq:formule variationnelle zu}
\frac{1}{\sqrt{\mu}}=-\min_{\substack{z\in H_0^1(\Omega) \\ z\neq0}}\frac{\int_\Omega z^2+\int_\Omega g_u(2z-z_u)}{\int_\Omega|\nabla z|^2}
\end{equation}
Moreover, if $g_u\geq0$, then $z_u< 0$.
\end{proposition}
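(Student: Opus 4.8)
The plan is to check the equation for $z_u$ by a direct substitution, then to recognise the solution of (\ref{eq:pb simplifie}) as the minimiser of an explicit coercive quadratic functional on $H_0^1(\Omega)$, and finally to read off the sign of $z_u$ from the maximum principle for a coercive Schrödinger-type operator.

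\emph{Preliminaries and the equation for $z_u$.} Since $\Omega$ is $C^4$ and $\Delta^2 u=\mu u$ with $u\in H_0^2(\Omega)$, elliptic regularity gives $u\in H^4(\Omega)$, so $\Delta u\in H^2(\Omega)$ has trace in $H^{3/2}(\partial\Omega)$ and the harmonic extension $g_u$ is a well-defined element of $H^2(\Omega)$; moreover $\mu>0$ because the Dirichlet bilaplacian is positive, so $\sqrt\mu$ makes sense. The function $z_u=\tfrac{\Delta u}{\sqrt\mu}+u-g_u$ then lies in $H^2(\Omega)$, and on $\partial\Omega$ one has $u=0$ and $\tfrac{\Delta u}{\sqrt\mu}=g_u$, hence $z_u=0$ on $\partial\Omega$, i.e. $z_u\in H_0^1(\Omega)$. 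A direct computation gives
\[
\Delta z_u=\tfrac{1}{\sqrt\mu}\Delta^2 u+\Delta u-\Delta g_u=\sqrt\mu\,u+\Delta u=\sqrt\mu\big(z_u+g_u\big)\quad\text{in }\Omega,
\]
which is (\ref{eq:pb simplifie}). I would also record that $z_u\not\equiv0$: if $z_u\equiv0$ then $\Delta u=-\sqrt\mu\,u$ in $\Omega$ with $u=\partial_n u=0$ on $\partial\Omega$, and unique continuation forces $u\equiv0$, contradicting that $u$ is an eigenfunction.

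\emph{The variational identity.} I would introduce the functional
\[
\mathcal F(z):=\frac{1}{\sqrt\mu}\int_\Omega|\nabla z|^2+\int_\Omega z^2+2\int_\Omega g_u\,z,\qquad z\in H_0^1(\Omega),
\]
which is strictly convex and coercive (its quadratic part dominates $\tfrac1{\sqrt\mu}\|\nabla z\|_{L^2}^2$), hence has a unique minimiser; its Euler--Lagrange equation is exactly $\Delta z=\sqrt\mu(z+g_u)$ with $z\in H_0^1(\Omega)$, so by the previous step the minimiser is $z_u$. Testing this equation against $z_u$ gives $\int_\Omega|\nabla z_u|^2=-\sqrt\mu\int_\Omega z_u(z_u+g_u)$, whence $\mathcal F(z_u)=\int_\Omega g_u z_u$. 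Consequently, for every $z\in H_0^1(\Omega)$,
\[
\frac1{\sqrt\mu}\int_\Omega|\nabla z|^2+\int_\Omega z^2+\int_\Omega g_u(2z-z_u)=\mathcal F(z)-\mathcal F(z_u)\ \geq\ 0,
\]
with equality if and only if $z=z_u$. Dividing by $\int_\Omega|\nabla z|^2>0$ shows the quotient in (\ref{eq:formule variationnelle zu}) is $\geq-\tfrac1{\sqrt\mu}$ for all $z\neq0$, with equality exactly at $z_u$ (which is admissible since $z_u\neq0$); its minimum is therefore $-\tfrac1{\sqrt\mu}$, which is (\ref{eq:formule variationnelle zu}).

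\emph{Sign of $z_u$.} Assume finally $g_u\geq0$. Rewriting (\ref{eq:pb simplifie}) as $(-\Delta+\sqrt\mu)z_u=-\sqrt\mu\,g_u\leq0$ in $\Omega$ with $z_u=0$ on $\partial\Omega$, the weak maximum principle for the coercive operator $-\Delta+\sqrt\mu$ (here $\sqrt\mu>0$) yields $z_u\leq0$ in $\Omega$, and the strong maximum principle then forces $z_u\equiv0$ or $z_u<0$ in $\Omega$; since $z_u\not\equiv0$ we conclude $z_u<0$. I expect the only mildly delicate points to be the bookkeeping of function spaces (traces, the harmonic extension, membership $z_u\in H_0^1(\Omega)$) and the non-vanishing of $z_u$; once the functional $\mathcal F$ is singled out, the heart of the argument is a single integration by parts together with standard convex-minimisation and maximum-principle facts.
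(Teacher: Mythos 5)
Your proof is correct and follows essentially the same route as the paper's: factoring $\Delta^2-\mu=(\Delta-\sqrt\mu)(\Delta+\sqrt\mu)$, subtracting the harmonic extension $g_u$, identifying $z_u$ as the unique minimiser of the convex energy, and reading the sign from the maximum principle for $-\Delta+\sqrt\mu$. The one small deviation is in ruling out $z_u\equiv0$: you invoke unique continuation for the Cauchy problem $-\Delta u=\sqrt\mu u$, $u=\partial_n u=0$ on $\partial\Omega$ (note you should first observe, from $\Delta z_u=\sqrt\mu(z_u+g_u)$, that $z_u\equiv0$ forces $g_u\equiv0$ before writing $\Delta u=-\sqrt\mu u$), whereas the paper uses the more elementary route that $u$ would be a Dirichlet–Laplace eigenfunction, is analytic, and so by Hopf's boundary lemma cannot have $\partial_n u\equiv0$; both are valid, and your placing this observation before the variational step is a slight improvement in rigour since it guarantees $z_u\neq0$ is admissible in the minimisation.
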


\begin{proof}
The eigenfunction $u$ satisfies by definition
\begin{equation*}\label{eq:pb vp}
\left\{
\begin{array}{rcll}
(\Delta^2-\mu) u & = & 0 & in \quad\Omega,\\
u & = & 0 & on \quad\partial\Omega,\\
\partial_n u & = & 0 & on \quad\partial\Omega.
\end{array}
\right.
\end{equation*}
The idea now relies on observing that $(\Delta^2-\mu)=(\Delta -\sqrt{\mu})(\Delta+\sqrt{\mu})$. Hence, setting $y=\left(\frac{\Delta}{\sqrt{\mu}}+1\right) u$, $y$ verifies $\Delta y = \sqrt{\mu}y$ in $\Omega$. Nevertheless, the boundary condition for $y$ is $y=\frac{\Delta}{\sqrt{\mu}} u$ on $\partial\Omega$. Note that $\frac{\Delta}{\sqrt{\mu}} u\in H^{\frac{3}{2}}(\partial\Omega)$ since $\Delta u\in H^2(\Omega)$ thanks to the regularity assumption made on $\partial\Omega$ (see \cite[Theorem 2.20]{gazzola-grunau-sweers}). But if $g_u$ is the solution to the Dirichlet problem $\Delta g_u=0$ in $\Omega$ and $g_u=\frac{\Delta}{\sqrt{\mu}} u$ on the boundary, setting $z_u:=y-g_u=\frac{\Delta}{\sqrt{\mu}} u + u-g_u$, one gets that $z_u$ is an $H_0^1(\Omega)\cap H^2(\Omega)$ function satisfying
\begin{equation*}\label{eq:pb simplifie naif}
\Delta z_u = \sqrt{\mu}(z_u + g_u).
\end{equation*}
In particular $z_u$ is a critical point of the functional $E_\mu$ defined on $H_0^1(\Omega)$ and given by
$$
E_\mu(z)=\int_\Omega|\nabla z|^2+\sqrt{\mu}\int_\Omega z^2+2\sqrt{\mu}\int_\Omega g_uz.
$$
Moreover, $E_\mu$ being strictly convex, $z_u$ is the unique minimiser. But, from the equation involving $z_u$, we derive the identity $E_\mu(z_u)=\sqrt{\mu}\int g_uz_u$. In this context, the relation
$$
\int_\Omega|\nabla z|^2+\sqrt{\mu}\int_\Omega z^2+2\sqrt{\mu}\int_\Omega g_uz\geq\sqrt{\mu}\int_\Omega g_uz_u,
$$
holding for all $z\in H_0^1(\Omega)$, is an equality if and only if $z=z_u$. Moreover, thanks to elementary manipulations, this inequality can be turned into the next one, which, as before, is attained if and only if $z=z_u$.
$$
-\frac{\int_\Omega z^2+\int_\Omega g_u(2z-z_u)}{\int_\Omega|\nabla z|^2}\leq\frac{1}{\sqrt{\mu}}.
$$
This completes the proof of (\ref{eq:formule variationnelle zu}). Finally, if $g_u\geq0$, the strong maximum principle applied to the operator $\Delta-\sqrt{\mu}$ in (\ref{eq:pb simplifie}) shows that $z_u<0$ unless $z_u$ vanishes identically in $\Omega$. But if $z_u=0$, due to (\ref{eq:pb simplifie}), $g_u=0$, and in turn $-\Delta u=\sqrt{\mu} u$ in $\Omega$. As a result, the function $v:\Omega\times\R\to\R$ defined by $v(x,t):=u(x)e^{\sqrt[4]{\mu}t}$ is harmonic in $\Omega\times\R$ and satisfies $v=\partial_n v=0$ on $\partial(\Omega\times\R)$. Thanks to \cite{tolsa}, we conclude that $v$ vanishes identically in each connected component of $\Omega\times\R$. Hence $u$ vanishes identically in $\Omega$.
\end{proof}

\begin{remarque}
\begin{enumerate}
\item Setting $y=\left(\frac{\Delta}{\sqrt{\mu}}-1\right) u$ instead of $y=\left(\frac{\Delta}{\sqrt{\mu}}+1\right) u$, we see that the function $z_u':=\frac{\Delta}{\sqrt{\mu}} u-u-g_u$ is $H_0^1(\Omega)\cap H^2(\Omega)$ and satisfies $-\Delta z_u'=\sqrt{\mu}(z_u'+g_u)$. However, we cannot obtain a variational formulation similar to (\ref{eq:formule variationnelle zu}) involving $z_u'$ since, unlike $E_\mu$, the energy functional of which $z_u'$ is a critical point is not convex.
\item Note that the system (\ref{eq:pb simplifie}) is linear with respect to $(z_u,g_u)$. As a consequence, the variational formula (\ref{eq:formule variationnelle zu}) remains true when replacing $z_u$ and $g_u$ respectively with $\gamma z_u$ and $\gamma g_u$ for any $\gamma\in\R\setminus\{0\}$.
\item The regularity on $\Omega$ can be weakened in some cases. More precisely, one shall run the proof of (\ref{eq:pb simplifie}) and (\ref{eq:formule variationnelle zu}) as long as $\Delta u\in H^1(\Omega)$.
\end{enumerate}
\end{remarque}

Surprisingly, Proposition \ref{prop:principe de réduction d'ordre} will not only serve proving Theorem \ref{thm:faber-krahn moyenne} and Theorem \ref{thm:surdetermine}. Indeed, it has the following consequence which will be very useful to prove the simplicity of the optimal first eigenvalue. First, let us recall that, in the case of fourth order equations, it is not known whether having $u=\partial_n u=\partial_n^2 u=0$ on some arbitrary portion $\gamma$ of $\partial\Omega$ yields $u=0$ in the neighbourhood of $\gamma$. The lack of this property (called uniqueness continuation), is due to the fact that neither Hölmgren principle nor Hopf boundary Lemma apply in this framework (see however Theorem 1.1 of \cite{ortega-zuazua} and the discussion above and below its statement).

\begin{corollaire}\label{cor:principe de réduction d'ordre}
Let $\Omega$ be a $C^4$ bounded open set, and $u\in H_0^2(\Omega)$ satisfy $\Delta^2u=\mu u$ for some $\mu>0$. Assume that $\Delta u=0$ on $\partial\Omega$. Then, $u=0$ in $\Omega$.
\end{corollaire}

\begin{proof}
Assume that $u$ does not vanish identically, so that it is an eigenfunction. The hypothesis $\Delta u=0$ on $\partial\Omega$ reads $g_u=0$ on $\partial\Omega$ and then in $\Omega$, where $g_u$ is defined as in Proposition \ref{prop:principe de réduction d'ordre}. Then, the function $z_u$ satisfies $\Delta z_u=\sqrt{\mu}z_u$. This means that either $z_u=0$, or $-\sqrt{\mu}$ is an eigenvalue of the Dirichlet Laplacian. As the latter cannot hold, $z_u=0$, and hence $-\Delta u=\sqrt{\mu} u$, so that $u$ is an eigenfunction of the Dirichlet Laplacian. Because $\partial_n u=0$, we run into a contradiction applying \cite{tolsa} to the harmonic extension of $u$ as in the end of the proof of Proposition \ref{prop:principe de réduction d'ordre}.
\end{proof}

\begin{remarque}
Corollary \ref{cor:principe de réduction d'ordre} holds under weaker regularity assumptions on $\Omega$. For instance, it is enough that\/ $\Omega$ is Lipschitz with small constant (see \cite{tolsa}), and satisfies a uniform outer ball condition. Indeed, under the outer ball condition and the Lipschitz regularity assumption, the system solution of the equation $\Delta^2 v=f$ on $\Omega$, $v=\Delta v=0$ on $\partial\Omega$ (see \cite[Example 2.33]{gazzola-grunau-sweers} for the definition of system and energy solutions) is $H_0^1\cap H^2(\Omega)$ due to \cite[Theorem 1.1]{adolfsson}. Thus it coincides with the energy solution. In particular, in Corollary \ref{cor:principe de réduction d'ordre}, $u$ being an energy solution, we get that $\Delta u\in H_0^1(\Omega)$. Therefore, Proposition \ref{prop:principe de réduction d'ordre} still applies (see the remark under its proof) and the proof of Corollary \ref{cor:principe de réduction d'ordre} works as long as the Lipschitz constant of $\Omega$ is small enough for using \cite{tolsa}.
\end{remarque}

\section{Shape derivatives}\label{sec:derivation}

In order to fully exploit Proposition \ref{prop:principe de réduction d'ordre}, one needs to gain information on the function $g_u$ (defined in the statement of the Proposition \ref{prop:principe de réduction d'ordre}) when $\Omega$ is an optimal shape. As $g_u$ depends on the value of $\Delta u$ on $\partial\Omega$, one might use shape derivatives. Shape derivatives for eigenvalues of polyharmonic operators are less famous than their counterparts for the Laplacian, for which one might refer to the classical textbook \cite{henrot-pierre}. Note moreover that this reference does not deal in details with the derivative of multiple eigenvalues. For a framework on the derivation of simple and multiple eigenvalues of a general abstract operator see \cite{haug-rousselet, lamberti-cristoforis}. For the concrete shape derivation of simple and multiple eigenvalues of the bilaplacian and polyharmonic operators, we found only few references \cite{buoso-lamberti13,buoso-lamberti15,ortega-zuazua,buoso,antunes-buoso-freitas}. In this section, we shall refer to \cite{buoso-lamberti15}, in which results on derivatives of multiple eigenvalues of several operators including the Dirichlet bilaplacian are obtained. For that purpose, assume $\Omega$ to be arbitrary, and let $\Gamma_k^\Omega$ be the functional defined on $C^2(\R^d,\R^d)$ by
\begin{equation}\label{eq:Gamma_Omega}
\Gamma_k^\Omega(V)=\Gamma_k((\text{id}+V)\Omega).
\end{equation}
Here, $\Gamma_k(\Omega)$ denotes the $k$-th eigenvalue of the bilaplacian on $\Omega$, \textbf{counted with multiplicity}. Then, if $\Gamma_k(\Omega)$ is of multiplicity $p\in\N^*$, and if $\Gamma_k(\Omega)=...=\Gamma_{k+p-1}(\Omega)$, \cite{buoso-lamberti15} explains that, in a neighbourhood $\mathcal{W}$ of $0$ in $C^2(\R^d,\R^d)$, the set $\{\Gamma_{k+i-1}^\Omega(V):1\leq i\leq p, V\in\mathcal{W}\}$ is made of the union of $p$ analytic branches (see Figure \ref{fig:vp multiple}). Moreover, the derivatives of these branches at $0$ correspond to the eigenvalues of an explicit matrix, as stated in the next theorem.

\begin{figure}[h!]
\centering
\resizebox{130mm}{!}{
\begingroup%
  \makeatletter%
  \providecommand\color[2][]{%
    \errmessage{(Inkscape) Color is used for the text in Inkscape, but the package 'color.sty' is not loaded}%
    \renewcommand\color[2][]{}%
  }%
  \providecommand\transparent[1]{%
    \errmessage{(Inkscape) Transparency is used (non-zero) for the text in Inkscape, but the package 'transparent.sty' is not loaded}%
    \renewcommand\transparent[1]{}%
  }%
  \providecommand\rotatebox[2]{#2}%
  \newcommand*\fsize{\dimexpr\f@size pt\relax}%
  \newcommand*\lineheight[1]{\fontsize{\fsize}{#1\fsize}\selectfont}%
  \ifx\svgwidth\undefined%
    \setlength{\unitlength}{465.6178251bp}%
    \ifx\svgscale\undefined%
      \relax%
    \else%
      \setlength{\unitlength}{\unitlength * \real{\svgscale}}%
    \fi%
  \else%
    \setlength{\unitlength}{\svgwidth}%
  \fi%
  \global\let\svgwidth\undefined%
  \global\let\svgscale\undefined%
  \makeatother%
  \begin{picture}(1,0.51759831)%
    \lineheight{1}%
    \setlength\tabcolsep{0pt}%
    \put(0,0){\includegraphics[width=\unitlength,page=1]{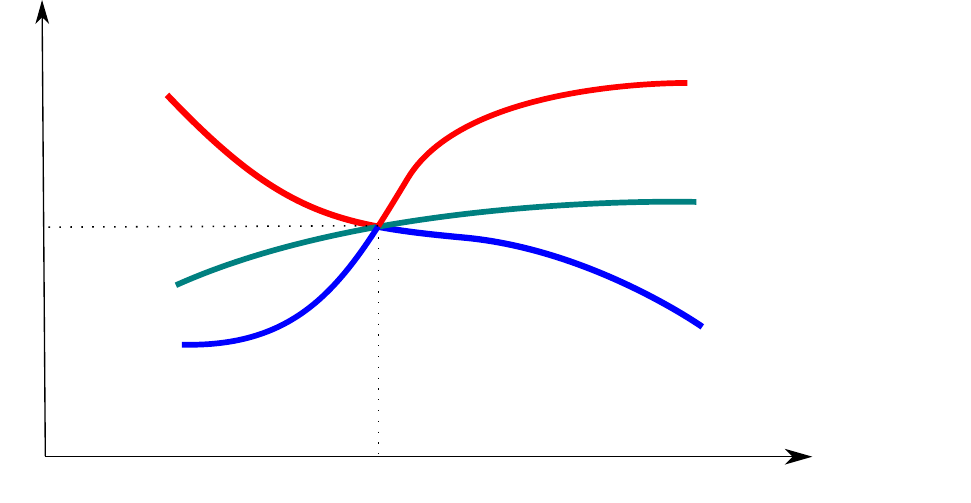}}%
    \put(0.81419531,0.00757143){\color[rgb]{0,0,0}\makebox(0,0)[lt]{\lineheight{1.25}\smash{\begin{tabular}[t]{l}$t$\end{tabular}}}}%
    \put(0.38461087,0.01083797){\color[rgb]{0,0,0}\makebox(0,0)[lt]{\lineheight{1.25}\smash{\begin{tabular}[t]{l}$0$\end{tabular}}}}%
    \put(-0.00325089,0.2772711){\color[rgb]{0,0,0}\makebox(0,0)[lt]{\lineheight{1.25}\smash{\begin{tabular}[t]{l}$\Gamma$\end{tabular}}}}%
    \put(0,0){\includegraphics[width=\unitlength,page=2]{vp-multiple-fig.pdf}}%
    \put(0.26552302,0.09681316){\color[rgb]{0,0,0}\makebox(0,0)[lt]{\lineheight{1.25}\smash{\begin{tabular}[t]{l}$\partial^-$\end{tabular}}}}%
    \put(0,0){\includegraphics[width=\unitlength,page=3]{vp-multiple-fig.pdf}}%
    \put(0.72105257,0.21764111){\color[rgb]{0,0,0}\makebox(0,0)[lt]{\lineheight{1.25}\smash{\begin{tabular}[t]{l}$\partial^+$\end{tabular}}}}%
  \end{picture}%
\endgroup%
}
\caption{Analytic branches near a multiple eigenvalue $\Gamma$ on domains of the form $(\text{id}+tV)\Omega$ for a given set $\Omega$ and vector field $V$. The blue, the green, and the red lines are respectively the graphs of $t\mapsto\Gamma_k^\Omega(tV)$, $t\mapsto\Gamma_{k+1}^\Omega(tV)$, and $t\mapsto\Gamma_{k+2}^\Omega(tV)$. The segment $\partial^-$ represents the tangent generated by the left partial derivative of $\Gamma_k^\Omega$ at $0$ in the direction of $V$. The segment $\partial^+$ represents the tangent generated by the right partial derivative of $\Gamma_k^\Omega$.}
\label{fig:vp multiple}
\end{figure}

\begin{theoreme}\label{thm:derivee vp}
Let $\Omega$ be a $C^4$ bounded open set and $k,p\in\N^*$. Assume that\/ $\Gamma_k(\Omega)=...=\Gamma_{k+p-1}(\Omega)=:\Gamma$ and that $\Gamma$ is of multiplicity $p$. Then, the functionals $\Gamma_k^\Omega,...,\Gamma_{k+p-1}^\Omega$ defined in (\ref{eq:Gamma_Omega}) are Gâteau-differentiable at $0$ both on the right and on the left, and their partial derivatives in the direction of a vector field $V\in C^2(\R^d,\R^d)$ (both on the right and on the left) shall be mapped in a bijective way to the eigenvalues (counted with multiplicity) of the matrix
\begin{equation}\label{eq:vp multiple}
M_V:=\left(-\int_{\partial\Omega}\Delta u_{k+i-1}\Delta u_{k+j-1} V\cdot\vec{n} \right)_{1\leq i,j\leq p},
\end{equation}
where $u_k,...,u_{k+p-1}$ is any $L^2$-orthonormal basis of the eigenspace corresponding to $\Gamma$.
\end{theoreme}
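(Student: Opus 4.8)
The plan is to reduce the claimed formula to the general machinery of shape differentiation for eigenvalues of self-adjoint operators, following the scheme of \cite{haug-rousselet} and \cite{ortega-zuazua}. First, I would recall the abstract setup: for a vector field $V\in W^{5,\infty}(\R^d,\R^d)$ one writes $\Omega_t=(\mathrm{id}+tV)\Omega$ and transports the eigenvalue problem back to the fixed domain $\Omega$ via the change of variables $x\mapsto(\mathrm{id}+tV)(x)$. This turns the bilaplacian eigenvalue problem on $\Omega_t$ into a problem on $\Omega$ governed by a family of operators $A(t)$ depending analytically on $t$, acting on the fixed space $H_0^2(\Omega)$, with the quadratic form $u\mapsto\int_\Omega(\Delta u)^2$ replaced by a $t$-dependent form and $\int_\Omega u^2$ by the pulled-back $L^2$ form (which picks up the Jacobian $\det(\mathrm{I}+tDV)$). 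The hypothesis $\Gamma_{k-1}(\Omega)<\Gamma_k(\Omega)=\dots=\Gamma_{k+p-1}(\Omega)<\Gamma_{k+p}(\Omega)$ isolates a spectral cluster of dimension exactly $p$, so by Kato--Rellich perturbation theory the $p$ eigenvalues emanating from this cluster, together with a basis of eigenfunctions, can be chosen to depend analytically on $t$ for $t$ in a neighbourhood of $0$; the corresponding $p$ analytic branches of $\Gamma_{k+i-1}^\Omega$ along the ray $t\mapsto tV$ are the restrictions of these. This gives Gâteaux-differentiability from the right and from the left (the one-sidedness is only because ordering the branches by size may swap which analytic branch is which as $t$ changes sign, exactly as in the Laplacian case treated in \cite{henrot-pierre}).

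Next I would compute the derivative of the cluster. The standard device is that the right (resp. left) derivatives of $\Gamma_k^\Omega,\dots,\Gamma_{k+p-1}^\Omega$ at $0$ in direction $V$ are the eigenvalues of the $p\times p$ symmetric matrix
\[
M_V=\Big(\langle \dot A(0)\,u_{k+i-1},\,u_{k+j-1}\rangle\Big)_{1\le i,j\le p},
\]
where $\{u_{k},\dots,u_{k+p-1}\}$ is any $L^2$-orthonormal basis of the eigenspace and $\dot A(0)$ is the derivative at $t=0$ of the transported operator, understood in the weak (quadratic-form) sense. Computing $\langle\dot A(0)u,v\rangle$ amounts to differentiating $\int_{\Omega}(\Delta u)(\Delta v)-\Gamma_k(\Omega)\int_\Omega uv$ after pull-back, i.e. a Hadamard-type boundary formula. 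Because $u,v\in H_0^2(\Omega)$ both vanish to order one on $\partial\Omega$, all bulk terms and all lower-order boundary terms drop out, and the only surviving contribution is the boundary integral of $(\Delta u)(\Delta v)\,V\cdot\vec n$; this is precisely Lemma 4.1 / formula (4.4) of \cite{ortega-zuazua}. One therefore gets $\langle\dot A(0)u_{k+i-1},u_{k+j-1}\rangle=-\int_{\partial\Omega}(\Delta u_{k+i-1})(\Delta u_{k+j-1})\,V\cdot\vec n$, so $M_V=-\big(\int_{\partial\Omega}(\Delta u_{k+i-1})(\Delta u_{k+j-1})\,V\cdot\vec n\big)_{i,j}$, and the (multi)set of its eigenvalues is, by definition, in bijection with the set of one-sided partial derivatives of the branches. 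Writing this set as $\mathcal D_V$ as in \eqref{eq:vp multiple} — where the diagonal entries $-\int_{\partial\Omega}(\Delta u_{k+i-1})^2 V\cdot\vec n$ appear — is the conclusion (note that in the simple case $p=1$ this recovers the familiar single formula).

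I would conclude by checking the regularity hypotheses make the boundary integral meaningful: by $C^4$-regularity of $\Omega$ and $L^p$-elliptic theory (\cite[Theorem 2.20]{gazzola-grunau-sweers}) each eigenfunction lies in $H^4(\Omega)\subseteq C^{2}(\overline\Omega)$, so $\Delta u_{k+i-1}$ has a well-defined trace in $H^{3/2}(\partial\Omega)$ and $V\cdot\vec n\in W^{4,\infty}(\partial\Omega)$, making $M_V$ well-defined and depending linearly on $V$. The main obstacle, and the step I would be most careful about, is the bookkeeping in the Hadamard boundary formula: one must verify that after pulling back by $(\mathrm{id}+tV)$ and differentiating, every term involving interior derivatives of the (only $H^4$, not smoother) eigenfunctions or involving $\partial_n u$, $\partial_n v$ cancels, leaving only $(\Delta u)(\Delta v)\,V\cdot\vec n$ — this is exactly where the Dirichlet conditions $u=\partial_n u=0$ are used and where \cite{ortega-zuazua} does the delicate work; I would cite their Lemma 4.1 and formula (4.4) rather than redo it, and otherwise the result follows by assembling the perturbation-theoretic and Hadamard ingredients above.
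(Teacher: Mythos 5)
The paper offers no proof of Theorem~\ref{thm:derivee vp}; it is stated as a citation to Theorem~3.5, Lemma~4.1 and formula~(4.4) of \cite{ortega-zuazua}, which is exactly the machinery (pull-back to the fixed domain, analytic perturbation of the isolated spectral cluster in the Kato--Rellich sense, Hadamard-type boundary formula for the bilinear form) that your proposal reconstructs. Your outline is therefore essentially the same approach as the source the paper relies on, and is sound; the only point to be careful about is the final identification — the one-sided derivatives are the \emph{eigenvalues} of the symmetric matrix $M_V=\bigl(-\int_{\partial\Omega}(\Delta u_{k+i-1})(\Delta u_{k+j-1})\,V\cdot\vec n\bigr)_{i,j}$, which coincide with the diagonal entries $-\int_{\partial\Omega}(\Delta u_{k+i-1})^2\,V\cdot\vec n$ only after choosing, for each fixed direction $V$, an $L^2$-orthonormal basis of the eigenspace that diagonalises $M_V$; you gesture at this but should state it explicitly, since the paper's display \eqref{eq:vp multiple} (note the stray index $j$) implicitly presumes such a $V$-dependent basis.
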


\begin{proof}
Apply \cite[Theorem 3.2]{buoso-lamberti15} with $k=2$ (see \cite[equation (3.1)]{buoso-lamberti15}), $\phi=\text{id}$ and $\phi_\epsilon=\phi+\epsilon V$, which is $C^2$, bounded over $\Omega$, as well as its derivatives, and satisfies $\det D\phi_\epsilon\neq0$ for $\epsilon$ small enough.
\end{proof}

\begin{remarque}
\begin{enumerate}
\item Since $M_V$ is real symmetric, it is diagonalisable, hence has $p$ eigenvalues counted with multiplicity. Moreover, the spectrum of $M_V$ does not depend upon the choice of the eigenfunctions $u_k$,...,$u_{k+p-1}$.
\item The crossing of eigenvalue branches (see Figure \ref{fig:vp multiple}) prevents $\Gamma_k^\Omega,...,\Gamma_{k+p-1}^\Omega$ from being differentiable at $0$ in general, even if they are both on the left and on the right. Indeed, their derivative on the left and on the right might not coincide since the bijection with the spectrum of $M_V$ changes when one derives on the left or on the right. In order to overcome this lack of differentiability, it is possible to consider combinations of eigenvalues, called elementary symmetric functions of the eigenvalues. For these, one obtains the stronger Fréchet-differentiability, see \cite[Theorem 3.1]{buoso-lamberti13}.
\item See Theorem 3.5, Lemma 4.1 and formula (4.4) of \cite{ortega-zuazua} for a similar result using slightly different vector fields of deformation than $C^2(\R^d,\R^d)$. Note however that in \cite[Lemma 4.1]{ortega-zuazua}, the eigenfunctions involved in the formula for the derivative seem to depend implicitly on the vector field, which makes the formula less intrinsic than (\ref{eq:vp multiple}).
\end{enumerate}
\end{remarque}

When the eigenvalue under consideration is simple, $M_V$ is a scalar. Consequently, there is plain differentiability, as stated below.

\begin{corollaire}\label{cor:derivee vp simple}
Let $\Omega$ be a $C^4$ bounded open set and $k\in\N^*$. Assume that\/ $\Gamma_k(\Omega)$ is simple and let $u_k$ be an associated $L^2$-normalised eigenfunction. Then, the functional $\Gamma_k^\Omega$ defined in (\ref{eq:Gamma_Omega}) is Gâteau-differentiable at $0$, and its partial derivative in the direction of a vector field $V\in C^2(\R^d,\R^d)$ is
\begin{equation}\label{eq:vp simple}
\partial_V\Gamma_k^\Omega(0)=-\int_{\partial\Omega}\left(\Delta u_{k}\right)^2 V\cdot\vec{n}.
\end{equation}
\end{corollaire}

This result shows that the shape derivative of the first eigenvalue precisely involves the values of the Laplacian of the first eigenfunction (as long as it is unique) on the boundary. But two issues remain. The first is to deal with the volume constraint appearing in (\ref{eq:pb}). To do so, we define, the volume functional $\mathcal{V}^\Omega:W^{1,\infty}(\R^d,\R^d)\to\R$ by
\begin{equation}
\mathcal{V}^\Omega(V)=|(\text{id}+V)\Omega|.
\end{equation}
Then, we build from $\Gamma_k^\Omega$ the functional $G_k^\Omega$ on $C_b^2(\R^d,\R^d)$, the class of $C^2$ vector fields being bounded as well as their derivatives, by setting
\begin{equation}\label{eq:G_Omega}
G_k^\Omega=\left(\mathcal{V}^\Omega\right)^{\frac{4}{d}}\Gamma_k^\Omega.
\end{equation}
It is classical to introduce $G_k^\Omega$ as it essentially behaves as $\Gamma_k^\Omega$ but has the property that $\omega\mapsto G_k^\omega(0)$ is scale-invariant, hence if $\Omega$ is an optimal shape for (\ref{eq:pb}), $0$ minimizes $G_k^\Omega$. Moreover, since the derivative of $\mathcal{V}^\Omega$ is known to be (see \cite[Theorem 5.2.2]{henrot-pierre}), for any $V\in W^{1,\infty}(\R^d,\R^d)$,
\begin{equation}\label{eq:derivee du volume}
\partial_V\mathcal{V}^\Omega(0)=\int_\Omega V\cdot\vec{n},
\end{equation}
we end up with the next corollary.

\begin{corollaire}\label{cor:derivee G_Omega}
With the hypotheses of Corollary \ref{cor:derivee vp simple}, the functional $G_k^\Omega$ defined in (\ref{eq:G_Omega}) is Gâteaux-differentiable at $0$, and its partial derivative in the direction of a vector field $V\in C_b^2(\R^d,\R^d)$ is
\begin{equation}
\partial_VG_k^\Omega(0)=\left[\int_{\partial\Omega}\frac{4\Gamma_k(\Omega)}{d|\Omega|}V\cdot\vec{n}-\int_{\partial\Omega}\left(\Delta u_k\right)^2V\cdot\vec{n}\right]|\Omega|^{\frac{4}{d}}.
\end{equation}
\end{corollaire}

The second issue regarding Corollary \ref{cor:derivee vp simple} is the assumption on the simplicity of $\Gamma_k(\Omega)$. Indeed, as already mentionned, in the context of fourth order elliptic operators, the lack of positivity prevents from using Krein-Rutman Theorem. As a result, one is unable to prove simplicity of the first eigenvalue, which actually fails in general (see \cite[Theorem 3.9]{gazzola-grunau-sweers}). Fortunately, as roughly justified in \cite{mohr}, it can be proved that simplicity holds for the principal eigenvalue on a domain with minimal eigenvalue. The proof of this fact is obtained by contradiction, using the derivative of a multiple eigenvalue. It will be a consequence of the next proposition, which describes a phenomenon of generic \enquote{eigenvalue splitting}, as illustrated in Figure \ref{fig:vp multiple}.

\begin{proposition}\label{prop:derivee vp multiple}
Let $\Omega$ be a \reg \/bounded open set and $k,p\in\N^*$, $p>1$. Assume that $\Gamma_k(\Omega)=...=\Gamma_{k+p-1}(\Omega)=:\Gamma$ and that $\Gamma$ is of multiplicity $p$. Then, there exists $V\in C_b^{2}(\R^d,\R^d)$ such that
\begin{align*}
& \partial_{V}^+\Gamma_k^\Omega(0)<0<\partial_{V}^+\Gamma_{k+p-1}^\Omega(0), \\
& \partial_{V}\mathcal{V}^\Omega(0)=0.
\end{align*}
Here, $\partial_V^+$ denotes the derivative in the direction $V$ on the right.
\end{proposition}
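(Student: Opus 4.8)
The plan is to use the characterisation of the derivatives of the multiple eigenvalue given in Theorem~\ref{thm:derivee vp}: the one-sided derivatives $\partial_V^\pm\Gamma_{k+i-1}^\Omega(0)$, $1\le i\le p$, are exactly the elements of the (multi)set $\mathcal{D}_V=\{-\int_{\partial\Omega}(\Delta u_{k+i-1})^2\,V\cdot\vec n:1\le i\le p\}$, up to some permutation. In particular, whatever the permutation, the largest element of $\mathcal{D}_V$ is $\max_i\left(-\int_{\partial\Omega}(\Delta u_{k+i-1})^2 V\cdot\vec n\right)$ and the smallest is $\min_i\left(-\int_{\partial\Omega}(\Delta u_{k+i-1})^2 V\cdot\vec n\right)$, and these are attained by $\partial_V^+\Gamma_{k+p-1}^\Omega(0)$ and by $\partial_V^-\Gamma_k^\Omega(0)$ respectively (since the branches are ordered). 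So it suffices to exhibit volume-preserving vector fields $V_+$ and $V_-$ making these extremal quantities respectively positive and negative.

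First I would reduce to a normalisation statement on the boundary. Set $f_i:=(\Delta u_{k+i-1})^2\ge 0$ on $\partial\Omega$ for $1\le i\le p$; these are continuous (indeed the eigenfunctions are $C^{3,\alpha}(\overline\Omega)$ by the regularity discussion), non-negative, and not identically zero (else $u_{k+i-1}$ would be an eigenfunction of the Dirichlet Laplacian by Corollary~\ref{cor:principe de réduction d'ordre}, contradicting $u_{k+i-1}\in H_0^2(\Omega)$, or directly: $\Delta u=\partial_n u=u=0$ on $\partial\Omega$ forces, via $y=(\Delta/\sqrt\mu\pm1)u$, that $u$ solves $\mp\Delta u=\sqrt\mu u$ with $u\in H_0^2$, impossible by Hopf). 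For $V_+$ I want $\int_{\partial\Omega}f_i\,V_+\cdot\vec n>0$ for \emph{every} $i$ together with $\int_{\partial\Omega}V_+\cdot\vec n=0$; then every element of $\mathcal D_{V_+}$ is negative, in particular $\partial_{V_+}^+\Gamma_{k+p-1}^\Omega(0)<0$ — wait, I have a sign to track. Let me instead aim for $\int_{\partial\Omega}f_i\,V_+\cdot\vec n<0$ for all $i$, which makes every element of $\mathcal D_{V_+}$ positive, hence $\partial_{V_+}^+\Gamma_{k+p-1}^\Omega(0)\ge$ each of them $>0$. Symmetrically, for $V_-$ I want $\int_{\partial\Omega}f_i\,V_-\cdot\vec n>0$ for all $i$, making every element of $\mathcal D_{V_-}$ negative, hence $\partial_{V_-}^-\Gamma_k^\Omega(0)<0$. (One also notes $\partial_{V}^+\Gamma_k^\Omega(0)\le\partial_{V}^-\Gamma_k^\Omega(0)$ is not what we need; rather the smallest branch picks up the smallest value on both sides, so $\partial_{V_-}^-\Gamma_k^\Omega$ being the minimum works directly; if the statement genuinely needs $\partial_{V_-}^+$, the same minimum-element argument applies since on the right too the collection of one-sided derivatives \emph{is} $\mathcal D_{V_-}$, so its minimum element is still $\le$ every $-\int f_i V_-\cdot\vec n<0$.)

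So the core is a linear-algebra/measure-theoretic lemma: given finitely many non-negative, non-zero $f_1,\dots,f_p\in C(\partial\Omega)$, there is a smooth scalar $h$ on $\partial\Omega$ with $\int_{\partial\Omega}h=0$ and $\int_{\partial\Omega}f_i h<0$ for all $i$ (and another with all integrals $>0$); extend $h\vec n$ to a $W^{5,\infty}$ field $V$. To build $h$: pick a point $p_0\in\partial\Omega$ and $r>0$ small so that, by continuity, we can find (using that each $f_i$ is positive on a set of positive surface measure) a common small geodesic ball $A$ on which $\min_i\int_A f_i>0$; take $h$ strongly negative (a bump) on $A$ and spread a compensating small positive constant $\varepsilon$ over $\partial\Omega\setminus A$ so that $\int h=0$, with $\varepsilon$ so small that $\varepsilon\int_{\partial\Omega}f_i$ is dominated by the negative contribution on $A$ — this is where I would be slightly careful, since I must ensure simultaneously for all $i$, but finiteness of $p$ and positivity of $\min_i\int_A f_i$ make it routine. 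Actually the cleanest route avoids even choosing a common $A$: work with the finite-dimensional cone $\mathcal C=\{(\int f_1 h,\dots,\int f_p h):\int h=0\}\subseteq\R^p$ and show $\mathcal C=\R^p$ (equivalently, the functionals $h\mapsto\int f_i h$ on the hyperplane $\{\int h=0\}$ are linearly independent and surjective onto $\R^p$), which follows because the $f_i$ together with $1$ are linearly independent in $L^2(\partial\Omega)$ — and that last independence is exactly the statement that no nontrivial linear combination of the $u_{k+i-1}$ can have $\Delta$-trace equal to a constant multiple of $\sqrt{\text{const}}$... hmm, this needs care: I would rather argue that if $\sum c_i f_i = $ const on $\partial\Omega$ with not all the ``balancing'' trivial, one derives a contradiction via Corollary~\ref{cor:principe de réduction d'ordre}-type reasoning applied to an appropriate eigencombination, or — more safely and self-containedly — I simply do not need full surjectivity: I only need that $(-1,\dots,-1)$ and $(1,\dots,1)$ lie in the closure of $\mathcal C$, and since $\mathcal C$ is a linear subspace (the image of a linear map) it is closed, so I need them in $\mathcal C$ itself, which is implied by the explicit bump construction above without any independence hypothesis.

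The main obstacle I anticipate is bookkeeping the one-sided derivatives correctly: Theorem~\ref{thm:derivee vp} only says the \emph{set} of right (resp.\ left) one-sided derivatives equals $\mathcal D_V$ as a multiset, with an unknown permutation that differs between left and right, so I must phrase everything in terms of $\max\mathcal D_V$ and $\min\mathcal D_V$ and invoke the ordering of the analytic branches to pin $\partial^+\Gamma_{k+p-1}^\Omega(0)=\max\mathcal D_V$ (the top branch's right-derivative must be the largest, since the branches stay ordered) and $\partial^\pm\Gamma_k^\Omega(0)=\min\mathcal D_V$ likewise. Once the sign choices in the construction of $V_\pm$ are aligned with these extremal selections, the proof is immediate; the vector-field regularity ($W^{5,\infty}$) is guaranteed because $\partial\Omega$ is $C^4$, so a $C^\infty$ function on $\partial\Omega$ extends to a field of the required regularity, and the volume derivative is $\int_{\partial\Omega}V_\pm\cdot\vec n=\int_{\partial\Omega}h=0$ by construction, as required.
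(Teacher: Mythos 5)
There is a genuine gap, and it is the very one the paper's proof is designed to handle. Your core ``lemma'' --- that for finitely many continuous, non-negative, not-identically-zero $f_1,\dots,f_p$ on $\partial\Omega$ one can always find $h$ with $\int_{\partial\Omega} h=0$ and $\int_{\partial\Omega} f_i h<0$ for \emph{all} $i$ --- is simply false in general: if some $f_{i_0}$ is a non-zero \emph{constant} on $\partial\Omega$, then $\int_{\partial\Omega} f_{i_0} h = f_{i_0}\int_{\partial\Omega} h = 0$ for every admissible $h$, and no choice of bump saves you. You flirt with this issue in the ``cone'' paragraph (noting that surjectivity of $h\mapsto(\int f_1 h,\dots,\int f_p h)$ requires that $f_1,\dots,f_p,1$ be linearly independent in $L^2(\partial\Omega)$), but then retreat to ``the explicit bump construction without any independence hypothesis,'' which does not in fact work. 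Also, nothing in the hypotheses forbids some individual $(\Delta u_{k+i-1})^2$ from being constant on $\partial\Omega$ for an arbitrary $C^4$ domain; the paper only rules out that \emph{all} of them are simultaneously constant.

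A second, structural observation: you are aiming for something strictly stronger than what the proposition needs. Because of the branch ordering that you yourself invoke, $\partial_{V}^+\Gamma_{k+p-1}^\Omega(0)=\max\mathcal{D}_V$ and $\partial_{V}^+\Gamma_{k}^\Omega(0)=\min\mathcal{D}_V$, and $\mathcal{D}_{-V}=-\mathcal{D}_V$. Hence it suffices to exhibit \emph{one} volume-preserving $V$ with $\mathcal{D}_V\neq\{0\}$, and then $V_+, V_-$ are $V$ and $-V$ (in the appropriate order). The paper runs exactly this: it assumes, toward a contradiction, that $\mathcal{D}_V=\{0\}$ for all volume-preserving $V$, uses density of $\{V\cdot\vec n:\int_{\partial\Omega}V\cdot\vec n=0\}$ in zero-mean continuous functions to conclude each $(\Delta u_{k+i-1})^2$ is constant, upgrades this to $\Delta u_{k+i-1}$ a.e.\ constant using $H^{3/2}(\partial\Omega)$-regularity, and then applies Corollary~\ref{cor:principe de réduction d'ordre} to the eigenfunction $c_j u_{k+i-1}-c_i u_{k+j-1}$ (which would have vanishing Laplacian trace) to get a contradiction. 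Your proposal mentions Corollary~\ref{cor:principe de réduction d'ordre} only to show each $f_i\not\equiv 0$, but never deploys it where it is actually indispensable, namely to exclude the all-constant-traces case; that omission is what leaves your construction unable to close.
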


\begin{proof}
We adapt \cite[Lemma 2.5.9]{henrot}. The idea is to use deformations which are localised around two arbitrary boundary points $A_+$ and $A_-$. For that purpose, set $\epsilon>0$, and take a vector field $V_\epsilon=V_\epsilon^++V_\epsilon^-$, where $V_\epsilon^+,V_\epsilon^-\in C_b^2(\R^d,\R^d)$ satisfy:
$$
\supp V_\epsilon^\pm\subseteq B(A_\pm,\epsilon),\qquad \int_{\partial\Omega}V_\epsilon^\pm\cdot\vec{n}=\pm1,\qquad
\int_{\partial\Omega}V_\epsilon\cdot\vec{n}=0.
$$
The last condition immediately gives $\partial_{V_\epsilon}\mathcal{V}^\Omega(0)=0$, in view of (\ref{eq:derivee du volume}). The other conditions tell that $\pm V_\epsilon^\pm\cdot\vec{n}$ is an approximate identity in $A_\pm$ on $\partial\Omega$. More precisely, if $\psi$ is a continuous function over $\partial\Omega$ with $\omega$ as modulus of continuity,
$$
\left|\psi(A_\pm)-\left(\pm\int_{\partial\Omega}\psi V_\epsilon^\pm\cdot\vec{n}\right)\right|\leq\int_{\partial\Omega\cap B(A_\pm,\epsilon)}|\psi-\psi(A_\pm)||V_\epsilon^\pm\cdot\vec{n}|\leq C\omega(\epsilon),
$$
and hence $V_\epsilon^\pm\cdot\vec{n}$ converges to $\pm\delta_{A_\pm}$ in the dual of $C(\partial\Omega)$.
Now, thanks to elliptic regularity (recall that $\Omega$ is $C^4$) and classical bootstrap arguments we get $u_{k+i-1}\in W^{4,p}(\Omega)$ for any $1<p<\infty$, and hence $u_{k+i-1}\in C^{3}(\overline{\Omega})$ by Sobolev injections. As a consequence, $\Delta u_{k+i-1}$ is continuous over $\partial\Omega$. Then, shrinking $\epsilon\to 0$, we observe that the matrix $M_{V_\epsilon}$ given in (\ref{eq:vp multiple}) converges to the matrix $M$ with coefficients
$$
-\Delta u_{k+i-1}(A_+)\Delta u_{k+j-1}(A_+)+\Delta u_{k+i-1}(A_-)\Delta u_{k+j-1}(A_-),
$$
for $1\leq i,j\leq p$. Setting $X_+$ and $X_-$ to be the vectors with coordinates $\Delta u_{k+i-1}(A_+)$ and $\Delta u_{k+i-1}(A_-)$ respectively, $i=1,...,p$, we obtain the relation
$$
^tXMX=-(X\cdot X_+)^2+(X\cdot X_-)^2,
$$
for any $X\in\R^p$. In particular, the matrix $M$ describes a quadratic form of signature $(1,1)$ as long as $X_+$ and $X_-$ are not colinear. At this point, recall that $X_+$ and $X_-$ depend on $A_+$ and $A_-$, which are arbitrary. We will show that we may tweak $A_+$ and $A_-$ for $X_+$ and $X_-$ not to be colinear. For that, assume by contradiction that $X_+$ and $X_-$ are colinear for any choice of $A_+$ and $A_-$ on the boundary, and let $1\leq i<j\leq p$. Then, the matrix
$$
\begin{pmatrix}
\Delta u_{k+i-1}(A_+) & \Delta u_{k+i-1}(A_-)\\
\Delta u_{k+j-1}(A_+) & \Delta u_{k+j-1}(A_-)
\end{pmatrix}
$$
has determinant $0$. Fixing $A_-$, and setting $c_i:=\Delta u_{k+i-1}(A_-)$ and $c_j:=\Delta u_{k+j-1}(A_-)$, this means that $c_j\Delta u_{k+i-1}(A_+)-c_i\Delta u_{k+j-1}(A_+)=0$ for all $A_+\in\partial\Omega$. In other words, the function $v:=c_ju_{k+i-1}-c_iu_{k+j-1}$ is an eigenfunction of the bilaplacian in $\Omega$ such that $\Delta v=0$ on $\partial\Omega$. Thanks to Corollary \ref{cor:principe de réduction d'ordre}, we obtain that $u_{k+i-1}$ and $u_{k+j-1}$ are colinear, a contradiction.

The previous shows that there exists some points $A_+,A_-\in\partial\Omega$ for which the matrix $M$ has signature $(1,1)$. In other words, the spectrum of $M$ admits a positive and a negative eigenvalue. As a result, for small enough $\epsilon$, $M_{V_\epsilon}$ also admits both positive and negative eigenvalues. Since, by Theorem \ref{thm:derivee vp}, the lowest eigenvalue of $M_{V_\epsilon}$ corresponds to $\partial_{V_\epsilon}^+\Gamma_{k}^\Omega(0)$ and the greatest eigenvalue to $\partial_{V_\epsilon}^+\Gamma_{k+p-1}^\Omega(0)$, we conclude that $
\partial_{V_\epsilon}^+\Gamma_{k}^\Omega(0)<0<\partial_{V_\epsilon}^+\Gamma_{k+p-1}^\Omega(0)$.
\end{proof}

The conclusions of the present section might be combined in order to obtain an information on the function $g_u$ defined in Proposition \ref{prop:principe de réduction d'ordre} in the case of an optimal domain $\Omega$. This is the purpose of the next paragraph.

\section{Simplicity of the eigenvalue and optimality conditions}\label{sec:simplicity}

In this section we prove Theorem \ref{thm:condition d'optimalité}, and discuss the corresponding optimality condition.

\begin{proof}[Proof of Theorem \ref{thm:condition d'optimalité}]
Let $\Omega$ be a $C^4$ optimal shape for problem (\ref{eq:pb}). The simplicity of $\Gamma(\Omega)$ is a direct consequence of Proposition \ref{prop:derivee vp multiple}, as if one had $\Gamma_1(\Omega)=\Gamma_2(\Omega)$, there would exist a vector field $V$ such that $\partial_V\mathcal{V}^\Omega(0)=0$ and $\partial_V^+\Gamma_1^\Omega(0)<0$. Then, we would have $\partial_V^+G_1^\Omega(0)<0$, hence $\Omega$ would not minimise $\omega\mapsto|\omega|^{\frac{4}{d}}\Gamma_1(\omega)$, so it would not solve (\ref{eq:pb}).

Now that simplicity has been proved, we shall invoke Corollary \ref{cor:derivee G_Omega}. Indeed, as $\Omega$ is an optimal shape, $0$ is a critical point of $G_1^\Omega$, hence we get the optimality condition
$$
0=\int_{\partial\Omega}\left[\frac{4\Gamma(\Omega)}{d|\Omega|}-\left(\Delta u\right)^2\right]V\cdot\vec{n},\qquad \forall V\in C_b^2(\R^d,\R^d).
$$
We conclude that $\Delta u=\pm\alpha$ a.e. on $\partial\Omega$. Moreover, since $\Delta u$ is continuous over $\overline{\Omega}$ (recall that, by bootstrap, $u\in W^{4,p}(\Omega)$ for all $1<p<\infty$), it is a.e. constant on each connected component of $\partial\Omega$.
\end{proof}

Note that the optimality condition given in Theorem \ref{thm:condition d'optimalité} is actually fulfilled by any $C^4$ regular shape $\Omega$ with simple principal eigenvalue and such that $0$ is a critical point for $G_1^\Omega$. This motivates the following definition.

\begin{definition}
An open set $\Omega$ is a critical shape (for the principal eigenvalue) if any $L^2$-normalised first eigenfunction $u$ on $\Omega$ is such that $\Delta u$ is a.e. constant equal to $\pm\sqrt{\frac{4\Gamma(\Omega)}{d|\Omega|}}$ on each connected component of $\partial\Omega$.
\end{definition}

\begin{remarque}
Any ball $B$ is a critical shape since, according to Propostion \ref{prop:fonction propre boule}, the first eigenfunction is radial (derive $G_1^B$ in the direction of a radially symmetric vector field). See also \cite{buoso} for more general results.
\end{remarque}

Considering the order reduction principle proved in Section \ref{sec:order reduction} and the optimality condition derived in the present section, we are equipped for proving Theorem \ref{thm:faber-krahn moyenne} and Theorem \ref{thm:surdetermine}.

\section{Proofs of Theorem \ref{thm:faber-krahn moyenne} and Theorem \ref{thm:surdetermine}}\label{sec:theorem}

In this section, we combine the order reduction principle (Proposition \ref{prop:principe de réduction d'ordre}) and the optimality condition (Theorem \ref{thm:condition d'optimalité}) to provide proofs for Theorem \ref{thm:faber-krahn moyenne} and Theorem \ref{thm:surdetermine}. Let us begin with the most straightforward, which is undoubtedly Theorem \ref{thm:surdetermine}. With Theorem \ref{thm:condition d'optimalité} in mind, we see that it is enough to prove Theorem \ref{thm:surdetermine} for critical shapes, which is performed below.

\begin{theoreme}\label{thm:surdetermine critique}
Let $\Omega$ be a critical shape satisfying (\ref{hyp:rg}). Let $u$ be a first eigenfunction on $\Omega$ such that $\partial_n\Delta u$ is constant on $\partial\Omega$. Then, $\Omega$ is a ball.
\end{theoreme}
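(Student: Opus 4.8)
The plan is to apply the order reduction principle of Proposition \ref{prop:principe de réduction d'ordre} to the first eigenfunction $u$ of $\Omega$ with eigenvalue $\mu=\Gamma(\Omega)$, and to show that the resulting second-order problem is an overdetermined problem of Serrin type, forcing $\Omega$ to be a ball. Since $\Omega$ is a critical shape, Theorem \ref{thm:condition d'optimalité} (or rather the defining property of critical shapes) tells us that $\Delta u$ is a.e. constant, equal to $\pm\alpha$ with $\alpha=\sqrt{4\Gamma(\Omega)/(d|\Omega|)}$, on each connected component of $\partial\Omega$; and by (\ref{hyp:rg}) the boundary $\partial\Omega$ is connected, so $\Delta u \equiv \pm\alpha$ on all of $\partial\Omega$. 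Replacing $u$ by $-u$ if necessary, we may assume $\Delta u = \alpha > 0$ on $\partial\Omega$. Hence the harmonic extension $g_u$ from Proposition \ref{prop:principe de réduction d'ordre} has boundary value $g_u = \alpha/\sqrt{\mu}$, a positive constant, so by the maximum principle $g_u \equiv \alpha/\sqrt{\mu}$ is that constant throughout $\Omega$; in particular $g_u \geq 0$, so the Proposition also gives $z_u < 0$ in $\Omega$.

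Next I would write down what the system (\ref{eq:pb simplifie}) becomes with $g_u$ constant: $z_u$ satisfies $\Delta z_u - \sqrt{\mu}\, z_u = \sqrt{\mu}\, g_u = \alpha$ in $\Omega$ and $z_u = 0$ on $\partial\Omega$. So far this is a single overdetermined-free boundary value problem, and I need a second boundary condition to invoke Serrin's theorem. This is where the hypothesis that $\partial_n\Delta u$ is constant on $\partial\Omega$ enters. Recall $z_u = \frac{\Delta}{\sqrt\mu} u + u - g_u$; since $u = \partial_n u = 0$ on $\partial\Omega$ (as $u \in H^2_0(\Omega)$) and $g_u$ is constant, we get $\partial_n z_u = \frac{1}{\sqrt\mu}\partial_n\Delta u + \partial_n u - \partial_n g_u = \frac{1}{\sqrt\mu}\partial_n\Delta u - \partial_n g_u$ on $\partial\Omega$. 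The term $\partial_n\Delta u$ is constant by hypothesis; and $\partial_n g_u$ — the normal derivative of the harmonic function that is constant — is identically zero. Therefore $\partial_n z_u$ is a (nonzero, by Hopf applied to $z_u < 0$) constant on $\partial\Omega$. So $z_u$ solves the overdetermined problem $\Delta z_u - \sqrt\mu\, z_u = \alpha$ in $\Omega$, $z_u = 0$ and $\partial_n z_u = \mathrm{const}$ on $\partial\Omega$.

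Finally I would invoke Serrin's symmetry result (\cite{serrin}), in the form valid for the semilinear/affine equation $\Delta z = f(z)$ with $f(z) = \sqrt\mu\, z + \alpha$ Lipschitz, together with the constant Dirichlet and Neumann data, to conclude that $\Omega$ is a ball (and $z_u$, hence $u$, is radial). One technical point to check carefully is that $z_u$ has enough regularity up to the boundary for Serrin's moving-plane argument: by the $L^p$/Schauder theory quoted in the introduction, $u \in C^{3,\alpha}(\overline\Omega)$, so $\Delta u \in C^{1,\alpha}(\overline\Omega)$ and $g_u$, being constant, is smooth, whence $z_u \in C^{1,\alpha}(\overline\Omega)$ — enough to run the method of moving planes on the $C^4$ domain $\Omega$. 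I expect the main obstacle to be purely bookkeeping: making sure the signs and the constant $\alpha$ are tracked correctly through the substitution (in particular that the sign normalization $\Delta u = +\alpha$ on the connected boundary is legitimate, using connectedness of $\partial\Omega$), and citing the correct version of Serrin's theorem that covers an affine right-hand side with the "wrong" sign $+\sqrt\mu\, z$ rather than a monotone decreasing nonlinearity — though since $z_u$ is already known to have constant sign and the domain is smooth, the standard moving-plane proof goes through without a maximum-principle subtlety.
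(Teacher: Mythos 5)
Your proposal is correct and follows essentially the same path as the paper's proof: use the criticality of $\Omega$ together with connectedness of $\partial\Omega$ to make $g_u$ a positive constant, apply the order reduction principle to obtain $z_u<0$ solving $\Delta z_u-\sqrt{\mu}\,z_u=\sqrt{\mu}\,g_u$ with $z_u=0$ and $\partial_n z_u$ constant on $\partial\Omega$, and conclude by Serrin's Theorem 2. Your worry about the sign $+\sqrt{\mu}\,z$ is indeed moot, as you note: Serrin's symmetry result in \cite{serrin} applies to a general Lipschitz right-hand side, the moving-plane argument relying only on the one-sign property of $z_u$ and the maximum principle on thin caps.
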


\begin{proof}
Without loss of generality, we assume $u$ to be $L^2$-normalised. Since $\Omega$ is a critical shape, we know that $\Delta u$ is a.e. constant on each connected component of $\partial\Omega$. But $\partial\Omega$ is assumed connected, hence $\Delta u$ is constant on $\partial\Omega$ equal to $\pm\alpha$, where $\alpha=\sqrt{\frac{4\Gamma(\Omega)}{d|\Omega|}}$. Considering $-u$ if needed, we shall assume that $\Delta u=\alpha$ a.e. on $\partial\Omega$, and, consequently, $g_u=\sqrt{\frac{4}{d|\Omega|}}>0$ a.e. not only on $\partial\Omega$ but in the whole $\Omega$. Applying the order reduction principle (Proposition \ref{prop:principe de réduction d'ordre}), we obtain that $z_u=\frac{\Delta}{\sqrt{\mu}}u+u-g_u$ is negative and satisfies (\ref{eq:pb simplifie}). Moreover, the fact that $\partial_n\Delta u$ remains constant on the boundary (combined with the fact that $g_u$ is constant) shows that $\partial_n z_u$ is constant on $\partial\Omega$. Thus $z_u<0$ satisfies an overdetermined problem of order 2, and we conclude applying Serrin's Theorem \cite[Theorem 2]{serrin}.
\end{proof}

We now turn to the proof of Theorem \ref{thm:faber-krahn moyenne}. To do so, we use the variational formulation of the first eigenvalue involving $z_u$ given by Proposition \ref{prop:principe de réduction d'ordre}. This new expression is interesting in the sense that it allows using symmetrisation techniques available for one-sign $H_0^1(\Omega)$ functions. That's why we recall the Schwarz symmetrisation (see the classical \cite{kawohl} for a general discussion on level set rearrangements).

\begin{definition}\label{def:schwarz}
Let $\Omega$ be an open set and $u$ be a measurable function on $\Omega$. Let $B$ be a ball of same volume than $\Omega$. The nonincreasing spherical symmetric rearrangment (also called Schwarz symmetrisation) of $u$ is the measurable function $u^*$ defined on $B$ such that its radial part is the generalised inverse of the distribution function $\mu_u$ of $u$, that is
$$
u^*(x):=\mu_u^{[-1]}(|B_{|x|}|)=\inf\{t:\mu_u(t)\leq |B_{|x|}|\}=\inf\{t:|\{u>t\}|\leq |B_{|x|}|\},
$$
where $B_r$ denotes the ball of radius $r$ and of same center as $B$. We recall that $u$ and $u^*$ are equimeasurable and that if $u\in H_0^1(\Omega)$ is nonnegative, $u^*\in H_0^1(B)$. Moreover, for any $z\in H_0^1(\Omega)$, we define $z^\#:=-(-z)^*$.
\end{definition}

Then, Theorem \ref{thm:faber-krahn moyenne} will be a consequence of the following result.

\begin{theoreme}\label{thm:faber-krahn moyenne critique}
Let $\Omega$ be a critical shape satisfying (\ref{hyp:rg}) and $B$ a ball such that $|\Omega|=|B|$. Let $u$ be a first $L^2$-normalised eigenfunction on $\Omega$ and $u_B$ a first $L^2$-normalised eigenfunction on $B$. Assume that
\begin{equation}\label{eq:inverse M}
\left|\int_\Omega u\right|\leq \left|\int_B u_B\right|.
\end{equation}
Then, inequality (\ref{eq:conjecture}) holds. Moreover, if (\ref{eq:inverse M}) is strict, (\ref{eq:conjecture}) is also strict. Finally, if\/ \mbox{$\Gamma(\Omega)=\Gamma(B)$}, $\Omega$ has to be a translation of $B$.
\end{theoreme}

\begin{proof}
Proceeding as in the beginning of the proof of Theorem \ref{thm:surdetermine critique}, we might assume that $g_u=\sqrt{\frac{4}{d|\Omega|}}>0$ a.e. in $\Omega$. Since $B$ is also a critical shape satisfying (\ref{hyp:rg}) (recall the remark below the definition of critical shapes), the same applies to $u_B$, and we conclude that we shall also take $g_{u_B}=\sqrt{\frac{4}{d|\Omega|}}$ a.e. in $B$. As a result of $g_{u_B}\geq0$, observe that $u_B$ (which is one-sign) is positive due to the maximum principle and to Hopf Boundary Lemma.

On the other hand, since $g_u\geq0$, we get $z_u<0$, and $z_u^\#$ is a negative $H_0^1(B)$ function. Moreover, the properties of the Schwarz symmetrisation ensure that $\int_\Omega|\nabla z_u|^2\geq\int_{B}|\nabla z_u^\#|^2$, $\int_\Omega z_u^2=\int_{B} (z_u^\#)^2$, and $\int_\Omega z_u=\int_{B} z_u^\#$. Therefore, thanks to Proposition \ref{prop:principe de réduction d'ordre},
$$
\frac{1}{\sqrt{\Gamma(\Omega)}}=-\frac{\int_{\Omega} |z_u|^2+g_u\int_{\Omega} z_u}{\int_\Omega|\nabla z_u|^2}\leq-\frac{\int_{B} |z_u^\#|^2+g_u\int_{B} z_u^\#}{\int_B|\nabla z_u^\#|^2}\leq-\min_{z\in H_0^1(B)}\frac{\int_{B} z^2+g_{u_B}\int_{B} (2z-z_u^\#)}{\int_B|\nabla z|^2}.
$$
Note that the numerator in the above quotients is always nonpositive (from the first equality), which justifies the first inequality. Now, we claim that $\int_B z_u^\#\leq\int_B z_{u_B}$. Indeed, if true, this result would lead to
$$
\frac{1}{\sqrt{\Gamma(\Omega)}}\leq-\min_{z\in H_0^1(B)}\frac{\int_{B} z^2+g_{u_B}\int_{B} (2z-z_{u_B})}{\int_B|\nabla z|^2}=\frac{1}{\sqrt{\Gamma(B)}},
$$
the last equality coming once again from Proposition \ref{prop:principe de réduction d'ordre} applied to $B$. This would in turn give the Faber-Krahn inequality $\Gamma(\Omega)\geq\Gamma(B)$. Note also that if $\int_B z_u^\#\leq\int_B z_{u_B}$ is strict, then $\Gamma(\Omega)\geq\Gamma(B)$ is also strict.

Hence it remains only to prove that $\int_B z_u^\#\leq\int_B z_{u_B}$. But thanks to the properties of the Schwarz rearrangement, $\int_B z_u^\#=\int_\Omega z_u$. Then, using the expression of $z_u$ combined with the fact that $\int_\Omega\Delta u=0$, we find $\int_B z_u^\#=\int_\Omega u-|\Omega|g_u$. In the same way, $\int_Bz_{u_B}=\int_B u_B-|B|g_{u_B}$. Thus, as $|\Omega|=|B|$ and $g_u=g_{u_B}$, we obtain that $\int_B z_u^\#\leq\int_B z_{u_B}$ if and only if $\int_\Omega u\leq\int_B u_B$, which holds by assumption (recall that $u_B>0$). Moreover, if one of these inequalities is strict, the other also holds strictly.

Lastly, if $\Gamma(\Omega)=\Gamma(B)$, all our inequalities become equalities. In particular, $\int_\Omega|\nabla z_u|^2=\int_B|\nabla z_u^\#|^2$, thus we apply \cite[Theorem 2.2]{friedman-mcleod}. This is possible since, on the one hand, as $u$ is analytic in $\Omega$, $z_u$ is also analytic, hence $|\{z_u=t\}|=0$ for all $\inf z_u<t<\sup z_u$. On the other hand, thanks to elliptic regularity \cite[Theorem 2.20]{gazzola-grunau-sweers} and to classical bootstrap arguments, $u$ is actually $W^{4,p}(\Omega)$, and in particular $C^{3,\gamma}(\overline{\Omega}),0<\gamma<1$ due to Sobolev embeddings. Finally, $z_u$ is Lipschitz in $\R^d$. Then, \cite[Theorem 2.2]{friedman-mcleod} yields that, up to translation, $z_u=z_u^\#$, and in particular that $\Omega$ is a ball.
\end{proof}

\begin{proof}[Proof of Theorem \ref{thm:faber-krahn moyenne}]
If $\Omega$ is an optimal shape satisfying (\ref{hyp:rg}), Theorem \ref{thm:condition d'optimalité} shows that $\Gamma(\Omega)$ is simple and that $\Omega$ is a critical shape. Assume that its $L^2$-normalized principal eigenfunction $u$ does not verify (\ref{hyp:moyennes}). Theorem \ref{thm:faber-krahn moyenne critique} then applies and shows that $\Gamma(\Omega)> \Gamma(B)$, contradicting the optimality of $\Omega$. Therefore, (\ref{hyp:moyennes}) holds. Moreover, in case of equality, Theorem \ref{thm:faber-krahn moyenne critique} still applies and gives $\Gamma(\Omega)\geq \Gamma(B)$. As $\Omega$ is optimal, we conclude that $\Gamma(\Omega)=\Gamma(B)$, hence Theorem \ref{thm:faber-krahn moyenne critique} implies that $\Omega=B$ up to a translation.
\end{proof}

Theorem \ref{thm:faber-krahn moyenne} relies on the central assumption that equality is attained in (\ref{hyp:moyennes}), or in other words that the converse of (\ref{hyp:moyennes}) holds. Unfortunately, the inequality $\left|\int_\Omega u\right|\leq\left|\int_B u_B\right|$ seems not easy to check in general. For instance, to estimate the mean value of $u$ on the optimal domain $\Omega$, one could try to use the inequality
\begin{equation}\label{eq:estimation de la moyenne}
\int_\Omega u\leq g_u|\Omega|=\sqrt{\frac{4|\Omega|}{d}},
\end{equation}
coming from the fact that $\int_\Omega z_u\leq 0$ (recall Proposition \ref{prop:principe de réduction d'ordre}). However, as $B$ is a critical shape, $u_B$ satisfies (\ref{eq:estimation de la moyenne}) as well, hence it is illusory to intend showing the reverse $\sqrt{\frac{4|\Omega|}{d}}\leq\int_B u_B$, since it would mean that $z_{u_B}=0$, in contradiction with Proposition \ref{prop:principe de réduction d'ordre}.

Nevertheless, even if having equality in (\ref{hyp:moyennes}) is a restrictive condition, Theorem \ref{thm:faber-krahn moyenne} has two interesting consequences that we shall explain in section \ref{sec:corollaires}.

\section{Consequences of Theorem \ref{thm:faber-krahn moyenne}}\label{sec:corollaires}

The first immediate corollary of Theorem \ref{thm:faber-krahn moyenne} regards the volume of one of the nodal domains of $u$.

\begin{corollaire}\label{corollaire:faber-krahn volume}
With the hypotheses of Theorem \ref{thm:faber-krahn moyenne}, if $\int_\Omega u>0$, writing $\Omega_+:=\{u>0\}$, then
\begin{equation}\label{eq:Omega+ max}
\sqrt{|\Omega_+|}>\int_B u_B.
\end{equation}
\end{corollaire}

\begin{proof}
Assume by contradiction that $\sqrt{|\Omega_+|}\leq\int_B u_B$. Then, $\int_\Omega u\leq\int_{\Omega_+} u\leq\sqrt{|\Omega_+|}\sqrt{\int_{\Omega_+}u^2}\leq\sqrt{|\Omega_+|}\leq\int_B u_B\leq\int_\Omega u$, the last inequality coming from Theorem \ref{thm:faber-krahn moyenne}. Therefore, all the previous inequalities, in particular Hölder's, are equalities. This means that $u=1$ in $\Omega_+=B$, a contradiction. 
\end{proof}

This result confirms that it might be interesting to evaluate the mean value of $u_B$. This is possible since $u_B$ shall be computed explicitly as it is stated in the next result, the proof of which is detailed in appendix page \pageref{preuve:fonction propre boule}.

\begin{restatable}{proposition}{fonctionPropreBoule}\label{prop:fonction propre boule}
Let $B$ be the ball $B(0,R)$. The first $L^2$-normalised eigenfunction $u_B$ is radially symmetric. Moreover, $\Gamma(B)$ and (up to sign) $u_B$ are given by the formulas
\begin{equation}\label{eq:uB}
\Gamma(B)=\frac{\gamma_\nu^4}{R^4},\qquad u_B(r)=\frac{1}{\sqrt{d|B|}}\left[\frac{J_\nu(k_\nu r)}{J_\nu(k_\nu R)}-\frac{I_\nu(k_\nu r)}{I_\nu(k_\nu R)}\right]\left(\frac{r}{R}\right)^{-\nu},
\end{equation}
where $\nu:=d/2-1$, $J_\nu$ and $I_\nu$ stand for the Bessel and modified Bessel functions of order $\nu$, and $k_\nu :=\gamma_\nu/R$, $\gamma_\nu$ being the first positive zero of $f_\nu$ defined by
$$
f_\nu(r)=\left[\frac{J_{\nu+1}}{J_\nu}(r)+\frac{I_{\nu+1}}{I_\nu}(r)\right]r^{d-1}.
$$
Finally,
\begin{equation}\label{eq:int uB}
\int_B u_B=\frac{\sqrt{d|B|}}{\gamma_\nu}\left[\frac{J_{\nu+1}}{J_\nu}(\gamma_\nu)-\frac{I_{\nu+1}}{I_\nu}(\gamma_\nu)\right]=2\frac{\sqrt{d|B|}}{\gamma_\nu}\frac{J_{\nu+1}}{J_\nu}(\gamma_\nu).
\end{equation}
\end{restatable}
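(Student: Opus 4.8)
\emph{Proof proposal.} The plan is to use the rotational symmetry of the ball to reduce (\ref{eq:equation aux vp}) to an ordinary differential equation, to identify the eigenvalue equation, and then to extract the normalising constant from the optimality condition of Theorem~\ref{thm:condition d'optimalité} rather than by computing an $L^2$ norm directly.

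First I would establish that $u_B$ is radial and that $\Gamma(B)$ is simple. On a ball the Green function of the Dirichlet bilaplacian is positive (Boggio's formula; see \cite{gazzola-grunau-sweers}), hence the resolvent is positivity preserving and irreducible, and Krein--Rutman gives that $\Gamma(B)$ is simple with a one-signed eigenfunction. Since the problem is invariant under rotations, $u_B\circ\rho$ is again a first eigenfunction for every rotation $\rho$, so by simplicity it equals $u_B$ up to sign, and in fact equals $u_B$ since both are $L^2$-normalised and of the same sign; therefore $u_B$ is radial. Writing $\Delta^2-\Gamma(B)=(\Delta-\sqrt{\Gamma(B)})(\Delta+\sqrt{\Gamma(B)})$ and discarding the solutions singular at the origin, the radial eigenfunction must then be a linear combination $u_B(r)=a\,r^{-\nu}J_\nu(kr)+b\,r^{-\nu}I_\nu(kr)$ with $\nu=d/2-1$ and $k=\Gamma(B)^{1/4}$, because $r^{-\nu}J_\nu(kr)$ and $r^{-\nu}I_\nu(kr)$ are the regular radial solutions of $\Delta v+k^2v=0$ and $\Delta v-k^2v=0$. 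The condition $u_B(R)=0$ pins down the combination up to a constant $C$ and produces exactly the bracket of (\ref{eq:uB}) times $(r/R)^{-\nu}$. The condition $\partial_n u_B(R)=u_B'(R)=0$ then involves only the derivative of the bracket (the bracket itself vanishing at $r=R$), giving $\tfrac{J_\nu'}{J_\nu}(kR)=\tfrac{I_\nu'}{I_\nu}(kR)$; using $xJ_\nu'(x)=\nu J_\nu(x)-xJ_{\nu+1}(x)$ and $xI_\nu'(x)=\nu I_\nu(x)+xI_{\nu+1}(x)$ this collapses to $\tfrac{J_{\nu+1}}{J_\nu}(kR)+\tfrac{I_{\nu+1}}{I_\nu}(kR)=0$, i.e. $f_\nu(kR)=0$. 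Since $u_B$ is one-signed, $kR$ must be the first positive zero of $f_\nu$, hence $k_\nu=\gamma_\nu/R$ and $\Gamma(B)=(\gamma_\nu/R)^4$.

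To fix $C$ I would avoid the Lommel-type integral $\int_0^{\gamma_\nu}[\cdots]^2 s\,ds$ and instead invoke Theorem~\ref{thm:condition d'optimalité}: by the remark that every ball is a critical shape (differentiate $G_1^B$ in a radial direction, using Corollary~\ref{cor:derivee G_Omega}), the $L^2$-normalised $u_B$ satisfies $\Delta u_B=\pm\alpha$ on $\partial B$ with $\alpha=\sqrt{4\Gamma(B)/(d|B|)}$. On the other hand $\Delta(r^{-\nu}J_\nu(kr))=-k^2 r^{-\nu}J_\nu(kr)$ and $\Delta(r^{-\nu}I_\nu(kr))=+k^2 r^{-\nu}I_\nu(kr)$, so evaluating $\Delta u_B$ at $r=R$ in the formula of (\ref{eq:uB}) with an undetermined constant $C$ gives $\Delta u_B(R)=-2k^2C=-2\sqrt{\Gamma(B)}\,C$; matching the two expressions forces $|C|=1/\sqrt{d|B|}$, and choosing the sign of $u_B$ making $C>0$ yields (\ref{eq:uB}). (Alternatively, one may compute $\int_B u_B^2$ directly from (\ref{eq:uB}) via the standard Lommel integrals, but this is heavier.) Finally, substituting (\ref{eq:uB}) into $\int_B u_B=\tfrac{d|B|}{R^d}\int_0^R u_B(r)\,r^{d-1}\,dr$, noting that the power of $r$ in the integrand is $r^{\nu+1}$ because $d-1-\nu=\nu+1$, and using $\int_0^a s^{\nu+1}J_\nu(s)\,ds=a^{\nu+1}J_{\nu+1}(a)$ together with the analogous identity for $I_\nu$, one reaches (\ref{eq:int uB}); the second equality in (\ref{eq:int uB}) is merely $f_\nu(\gamma_\nu)=0$.

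The main obstacle is the first step. Radial symmetry of $u_B$ is not soft for fourth-order problems, and the argument genuinely relies on the positivity of Boggio's Green function, a feature special to balls; without it the sign-indefiniteness of $\Delta u$ blocks the usual symmetrisation route. Everything afterwards is bookkeeping with Bessel recurrences and elementary integrals, the only delicate point being to check that $f_\nu$ is well defined up to its first zero (that $kR$ does not reach the first zero of $J_\nu$ beforehand), which is again where the one-sign property of $u_B$ enters.
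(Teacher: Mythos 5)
Your proposal is correct and the overall skeleton (radial ODE, factorization of $\Delta^2-\Gamma(B)$, Bessel recurrences giving $f_\nu(kR)=0$, and the final Bessel integral for $\int_B u_B$) matches the paper's, but you take two genuinely different routes in the interior steps. First, for radial symmetry and simplicity, the paper simply cites Ashbaugh--Benguria, whereas you derive it from Boggio's formula plus Krein--Rutman and rotational invariance; both are sound, and your version is more self-contained. Second --- and this is the real divergence --- for the normalising constant the paper computes $\int_B u_B^2=1$ head-on via the Lommel/Gradshteyn--Ryzhik integrals $\int_0^1 x J_\nu(\alpha x)J_\nu(\beta x)\,dx$ (including the $\alpha\to\beta$ limit and the analytic continuation to $\beta=i\gamma_\nu$), which takes about a page of algebra, while you bypass this entirely by exploiting the criticality of the ball: since $u_B$ is radial, $\Delta u_B$ is constant on $\partial B$, the derivative of $G_1^B$ in radial directions vanishes by scale invariance, and Corollary~\ref{cor:derivee G_Omega} then pins down $(\Delta u_B(R))^2=4\Gamma(B)/(d|B|)$; combined with $\Delta u_B(R)=-2k^2C$ this gives $|C|=1/\sqrt{d|B|}$ at once. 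Your shortcut is slicker and avoids the heaviest computation, at the cost of relying on the shape-derivative machinery (simplicity of $\Gamma(B)$ must already be known, which you supply); the paper's computation is more elementary and independent of the optimality-condition apparatus. One small point you flag yourself and should make explicit if you write this up: that $kR$ is the \emph{first} positive zero of $f_\nu$ is cleanest to justify by minimality of $\Gamma(B)=k^4$ among radial eigenvalues rather than by the one-sign property, and the interlacing $j_{\nu,1}<\gamma_{\nu,1}<j_{\nu,2}$ (cited in the paper from Baricz--Ponnusamy--Singh) is what guarantees $J_\nu(kR)\neq 0$ so the bracket in \eqref{eq:uB} is well-defined.
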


Note that it is easy to evaluate numerically (\ref{eq:int uB}). Indeed, in Python 3 for instance, the package \verb|special|, from the module \verb|scipy|, directly provides the functions \verb|jv| and \verb|iv|, corresponding respectively to the Bessel functions $J_\nu$ and $I_\nu$. Then it remains to compute $\gamma_\nu$, but this can be done by dichotomy thanks to (\ref{eq:entrelacement des zeros}) as long as one knows $j_{\nu,1}$ and $j_{\nu,2}$ (where, for $n\in\N^*$, $j_{\nu,n}$ are the positive zeros of $J_\nu$). For that observe, as explained in Theorem 2.1 and Theorem 2.2 of \cite{ikebe-kikuchi-fujishiro}, that the zeros of $J_\nu$  can be approximated by computing the eigenvalues of some matrix.

In Table \ref{tab:int uB} is given the value of $\int_B u_B$ in the case where $B$ is the ball of volume $1$. We also give the minimum volume allowed for $\Omega_+$ to satisfy (\ref{eq:Omega+ max}), that is $(\int_B u_B)^2$.

\begin{table}[h!]
$$
\begin{array}{c|c|c}
d & \int_B u_B & (\int_B u_B)^2 \\
\hline
4 & 0.6056 & 0.3668 \\
5 & 0.5643 & 0.3185 \\
6 & 0.5308 & 0.2817 \\
7 & 0.5028 & 0.2528 \\
8 & 0.4790 & 0.2294 \\
9 & 0.4583 & 0.2101
\end{array}
$$
\caption{Value of $\int_B u_B$ for several dimensions. Here, $B$ is chosen to be the ball of volume $1$.}
\label{tab:int uB}
\end{table}

Let us now discuss another consequence of Theorem \ref{thm:faber-krahn moyenne}. In view of proving the Rayleigh Conjecture, Theorem \ref{thm:faber-krahn moyenne} tells that the last step would be to show the converse of (\ref{hyp:moyennes}) for an optimal shape $\Omega$. On the other hand, the optimality of $\Omega$ means, by definition, that
$$
\int_{\Omega}|\Delta u|^2\leq\int_B|\Delta u_B|^2,
$$
where $u$ (resp. $u_B$) is an $L^2$-normalised first eigenfunction on $\Omega$ (resp. $B$). From this inequality, one shall wonder whether it is possible to deduce the converse of (\ref{hyp:moyennes}). This problem is actually not so far from a maximum principle type property, which classically asserts that if $v_1,v_2\in H_0^1(\omega)$ satisfy $-\Delta v_1\leq-\Delta v_2$, then $v_1\leq v_2$ in $\omega$. In our situation, it would be desirable to convert these pointwise inequalities into integral ones. Therefore, even if it does not immediately answer our initial concern, it would be interesting to study to which extent the following $L^p$ norm and mean value formulations of the maximum principle hold: for $v_1\in W_0^{1,p}(\omega_1)\cap W^{2,p}(\omega_1)$ and $v_2\in W_0^{1,p}(\omega_2)\cap W^{2,p}(\omega_2)$,
\begin{align}
\int_{\omega_1}|\Delta v_1|^p\leq\int_{\omega_2}|\Delta v_2|^p\qquad & \Longrightarrow \qquad \int_{\omega_1} |v_1|^p\leq\int_{\omega_2} |v_2|^p,\label{eq:principe du max Lp norme}\\
\int_{\omega_1}(-\Delta v_1)^p\leq\int_{\omega_2}(-\Delta v_2)^p\qquad & \Longrightarrow \qquad \int_{\omega_1} v_1^p\leq\int_{\omega_2} v_2^p.
\label{eq:principe du max Lp moyen}
\end{align}
At this time, we were not able to answer the above (quite vague) questions, and could only argue that (\ref{eq:principe du max Lp moyen}) cannot hold in full generality for $p=1$, since it would imply that any $H_0^2$ function has zero mean value. Anyway, in the remaining, we will state an interesting consequence of Theorem \ref{thm:faber-krahn moyenne} using the standard maximum principle combined with Talenti's comparison principle, which we recall below (see \cite{talenti76}).

\begin{theoreme}\label{thm:talenti}
Let $\omega$ be an open set and $\omega^*$ its Schwarz symmetrisation. Let $f\in L^2(\omega)$ and $u\in H^2(\omega)$ the solution of
\begin{equation*}
\begin{cases}
-\Delta u = f & in\quad\omega,\\
u=0 & on\quad\partial\omega.
\end{cases}
\end{equation*}
Let $f^*,u^*\in L^2(\omega^*)$ be the Schwarz symmetrisations of $f,u$ and let $v\in H^2(\omega^*)$ solve
\begin{equation*}
\begin{cases}
-\Delta v = f^* & in\quad \omega^*,\\
v=0 & on\quad\partial \omega^*.
\end{cases}
\end{equation*}
Assume that $u\geq0$. Then,
\begin{equation*}
v\geq u^*\qquad\text{a.e. in }\omega^*.
\end{equation*}
\end{theoreme}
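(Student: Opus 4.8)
The plan is to prove this classical comparison statement (due to Talenti) by the level-set, or distribution-function, method: one reduces the PDE inequality to a one-dimensional differential inequality for the distribution function of $u$, and observes that the same inequality is an \emph{equality} for $v$ because on the symmetrised ball every estimate used is sharp. Throughout write $\mu(t):=|\{u>t\}|$ for the distribution function of $u$, $t\in(0,\sup u)$, and recall from Definition \ref{def:schwarz} that the radial profile of $u^*$ is the generalised inverse of $\mu$. Assuming $u\not\equiv0$ (otherwise the claim is trivial, $v\ge0$), the inequality $v\ge u^*$ will follow once we show that $\mu$ decays at least as fast as the distribution function $\nu$ of $v$.

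First I would fix a value $t>0$ that is regular in a suitable sense (almost every $t$, after discarding the countably many $t$ with $|\{u=t\}|>0$ and handling the critical set $\{\nabla u=0\}$) and integrate the equation over the superlevel set $\Omega_t:=\{u>t\}$, whose boundary inside $\omega$ is $\{u=t\}$ because $u=0$ on $\partial\omega$. The divergence theorem gives
\[
\int_{\Omega_t}f=-\int_{\Omega_t}\Delta u=\int_{\{u=t\}}|\nabla u|\,d\mathcal{H}^{d-1}\ \ge\ 0,
\]
since the outward normal to $\Omega_t$ along $\{u=t\}$ is $-\nabla u/|\nabla u|$. Combining Cauchy--Schwarz with the coarea identity $-\mu'(t)=\int_{\{u=t\}}|\nabla u|^{-1}\,d\mathcal{H}^{d-1}$ yields
\[
\mathcal{H}^{d-1}(\{u=t\})^2\ \le\ \left(\int_{\Omega_t}f\right)\bigl(-\mu'(t)\bigr),
\]
and then the isoperimetric inequality $\mathcal{H}^{d-1}(\{u=t\})\ge d\,\omega_d^{1/d}\,\mu(t)^{(d-1)/d}$ (with $\omega_d$ the volume of the unit ball) together with the Hardy--Littlewood bound $\int_{\Omega_t}f\le\int_0^{\mu(t)}f^*$ (identifying $f^*$ with its radial profile, the decreasing rearrangement of $f$) gives the key differential inequality
\[
-\mu'(t)\ \ge\ \Phi(\mu(t)),\qquad\text{where}\qquad \Phi(s):=\frac{\bigl(d\,\omega_d^{1/d}\bigr)^{2}\,s^{2(d-1)/d}}{\int_0^{s}f^*(\sigma)\,d\sigma}.
\]

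Next I would run the identical computation for $v$ on $\omega^*$ with data $f^*$. By the maximum principle $v\ge0$, and since $f^*$ is radially symmetric and nonincreasing so is $v$; hence the superlevel sets $\{v>t\}$ are concentric balls on which $|\nabla v|$ is constant. Consequently the Cauchy--Schwarz step and the isoperimetric step are both \emph{equalities}, and $\int_{\{v>t\}}f^*=\int_0^{\nu(t)}f^*$ exactly, so $\nu(t):=|\{v>t\}|$ satisfies $-\nu'(t)=\Phi(\nu(t))$. Passing to the generalised inverse functions $U$ of $\mu$ and $V$ of $\nu$ on $(0,|\omega|)$ — which are precisely the radial profiles of $u^*$ and of $v$ — the two relations become $-U'(m)\le 1/\Phi(m)$ and $-V'(m)=1/\Phi(m)$, so $V-U$ is nonincreasing on $(0,|\omega|)$. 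Since $U(|\omega|)=\inf_\omega u=0=V(|\omega^*|)$ and $|\omega|=|\omega^*|$, we conclude $V-U\ge0$ throughout, i.e.\ $v\ge u^*$ a.e.\ in $\omega^*$, which is the claim.

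The main obstacle is measure-theoretic rather than conceptual: making rigorous, for $u$ merely in $H^2(\omega)$, the identities $\int_{\Omega_t}f=\int_{\{u=t\}}|\nabla u|\,d\mathcal{H}^{d-1}$ and $-\mu'(t)=\int_{\{u=t\}}|\nabla u|^{-1}\,d\mathcal{H}^{d-1}$ for a.e.\ $t$, which forces one to control the critical set $\{\nabla u=0\}$, the level sets of positive measure, and the absolute continuity of $\mu$ (and of $U$), so that the final step really is an integrated ODE comparison. This is handled by the coarea formula for Sobolev functions combined with a truncation/approximation argument, and by noting that the flat part of $\{u=t\}$ carries no mass for $\int_{\Omega_t}f$. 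These are exactly the delicate points in \cite{talenti76}, which for the purposes of the present paper we simply invoke.
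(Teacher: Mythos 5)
The paper does not prove Theorem \ref{thm:talenti}: it recalls it verbatim from \cite{talenti76} (with the sign convention of \cite{kesavan}, as explained in the remark that follows), so there is no proof in the paper against which to compare your attempt. That said, your sketch reconstructs Talenti's level-set argument with the right overall architecture: integrate the equation over $\{u>t\}$, combine Cauchy--Schwarz with the coarea identity, then the isoperimetric and Hardy--Littlewood inequalities to obtain a differential inequality for the distribution function $\mu$, and observe that the same chain is sharp on the symmetrised ball; pass to generalised inverses and integrate. The measure-theoretic caveats you flag at the end (critical set, level sets of positive measure, absolute continuity of $\mu$ and $U$) are indeed the delicate technical points.

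There is, however, a genuine gap in the step handling the radial problem. You write ``By the maximum principle $v\ge 0$, and since $f^*$ is radially symmetric and nonincreasing so is $v$''. The maximum principle would require $f^*\ge 0$, which is false in general: even when $u\ge 0$, the source $f=-\Delta u$ typically changes sign, hence so does $f^*$ (nonnegative near the centre of $\omega^*$, nonpositive near $\partial\omega^*$). Nor does the monotonicity of $f^*$ transfer directly to $v$. The correct argument --- and this is precisely where the hypothesis $u\ge 0$ is used, beyond fixing $U(|\omega|)=0$ --- is that $u\ge 0$ with $u=0$ on $\partial\omega$ forces $\int_\omega f=-\int_{\partial\omega}\partial_n u\ge 0$; then the concave function $F(s):=\int_0^s f^*$ satisfies $F(0)=0$ and $F(|\omega|)\ge 0$, hence $F\ge 0$ on $[0,|\omega|]$, and integrating the radial ODE gives $r^{d-1}v'(r)=-\int_0^r \rho^{d-1}f^*(\rho)\,d\rho\le 0$, so $v$ is radially nonincreasing and therefore (since $v=0$ on $\partial\omega^*$) nonnegative. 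Without this, the equalities you rely on for $v$ (that $\{v>t\}$ is a concentric ball, that $\int_{\{v>t\}}f^*=\int_0^{\nu(t)}f^*$, and that the radial profile of $v$ coincides with the inverse of $\nu$) are not available, and the comparison collapses.
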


\begin{remarque}
\begin{enumerate}
\item The hypothesis $u\geq0$ is not precised in \cite{talenti76}, but it is mentionned in \cite[Theorem 3.1.1]{kesavan}. This comes from the definition of Schwarz symmetrisation for signed functions, which differs in both references. Here, in view of Definition \ref{def:schwarz}, we conform to the convention adopted in \cite{kesavan}.

\item We mention that, as long as $u$ is assumed nonnegative, Schwarz symmetrisation might be replaced by Talenti symmetrisation (see \cite{talenti81}), which is defined in the following way: let $f\in L^2(\omega)$, we set, for all $s\in[0,|\omega|[$,
$$
f^\dagger(s)=f_+^*(s)-f_-^*(|\omega|-s).
$$
Then, the Talenti symmetrisation of $f$ is the function $f^\dagger$ defined on $\omega^*$ by $\forall x\in \omega^*$,
$$
f^\dagger(x):=f^\dagger(|B_{|x|}|).
$$
\end{enumerate}
\end{remarque}

\begin{corollaire}
With the hypotheses of Theorem \ref{thm:faber-krahn moyenne}, assume without loss of generality that $u_B>0$ and that $\int_\Omega u>0$. Writing $\Omega_+:=\{u>0\}$ and $\Omega_+^*$ its Schwarz symmetrisation, if $(-\Delta u|_{\Omega_+})^*\leq -\Delta u_B$ in $\Omega_+^*$, then, up to a translation, $\Omega=B$.
\end{corollaire}

\begin{remarque}
\begin{enumerate}
\item
We stress that if $\Omega$ is an optimal shape, then
$$
\int_{B}\left|(-\Delta u)^*\right|^2=\int_{\Omega}(\Delta u)^2=\Gamma(\Omega)\leq\Gamma(B)=\int_B(-\Delta u_B)^2.
$$
The assumption of the corollary is then a pointwise version of this inequality.

\item As we shall see in the proof, the assumption $(-\Delta u|_{\Omega_+})^*\leq -\Delta u_B$ is used for applying the maximum principle, which in turn yields a pointwise inequality between $u$ and $u_B$, although only an inequality in terms of mean value is actually needed to invoke Theorem \ref{thm:faber-krahn moyenne}. That's why, if one could prove some \enquote{mean value maximum principle} as in (\ref{eq:principe du max Lp norme}) and (\ref{eq:principe du max Lp moyen}), one could hope to drop the assumption.

\item As in Theorem \ref{thm:talenti}, Schwarz symmetrisation $^*$ might be replaced by Talenti's one $^\dagger$.
\end{enumerate}
\end{remarque}

\begin{proof}
We set $f:=-\Delta u|_{\Omega_+}$. Let $v$ be the $H_0^1(\Omega_+^*)$ solution of the problem $-\Delta v=f^*$ in $\Omega_+^*$. According to Talenti's comparison principle, since $u\geq 0$ in $\Omega_+$, $v\geq u^*$ in $\Omega_+^*$.

Then, as $-\Delta v=(-\Delta u|_{\Omega_+})^*\leq-\Delta u_B$ in $\Omega_+^*$, we get $-\Delta (u_B-v)\geq 0$ in $\Omega_+^*$. Thus the maximum principle forces $u_B-v$ to reach its minimum value on the boundary of $\Omega_+^*$. Moreover, $u_B\geq 0$ on $B$ and hence on $\partial\Omega_+^*$. Therefore $u_B-v\geq 0$ on $\partial\Omega_+^*$. To conclude, $u_B\geq v$ not only on the boundary, but in the whole $\Omega_+^*$.

We obtained that $u^*\leq u_B$ pointwisely in $\Omega_+^*$. In particular, since $u_B\geq0$,
$$
\int_\Omega u\leq\int_{\Omega_+} u=\int_{\Omega_+^*}u^*\leq\int_{\Omega_+^*} u_B\leq\int_B u_B,
$$
and we conclude thanks to Theorem \ref{thm:faber-krahn moyenne}.
\end{proof}

\appendix
\setcounter{secnumdepth}{0}
\section{Appendix}

\begin{proof}[Proof of Proposition \ref{prop:fonction propre boule}]\label{preuve:fonction propre boule}
For readability, we ommit the subscript $B$ in $u_B$. According to \cite{ashbaugh-benguria}, in $B$, the first eigenfunction is radially symmetric and of the form $\forall r\in[0,R[$,
$$
u(r)=\left(aJ_\nu(k r)+bI_\nu(k r)\right)r^{-\nu},
$$
where $k:=\Gamma(B)^{\frac{1}{4}}$. Then, using the identities $J_\nu'(x)=\frac{\nu J_\nu(x)}{x}-J_{\nu+1}(x)$ and $I_\nu'(x)=\frac{\nu I_\nu(x)}{x}+I_{\nu+1}(x)$ we find
$$
\partial_r u(r)=\left(-aJ_{\nu+1}(kr)+bI_{\nu+1}(kr)\right)kr^{-\nu}.
$$
Now, for $u$ to fulfill the condition $u(R)=\partial_r u(R)=0$ although being non trivial, one observes that the matrix
$$
M=\left(\begin{array}{cc}
J_\nu(kR) & I_\nu(kR) \\
-J_{\nu+1}(kR) & I_{\nu+1}(kR)
\end{array}\right)
$$
needs having a non trivial kernel. In other words, its determinant needs to vanish, hence
$$
f_\nu(kR)=J_\nu(kR)I_{\nu+1}(kR)+J_{\nu+1}(kR)I_\nu(kR)=0.
$$
Conversely, as soon as $k$ satisfies this equation, $u$ will be solution of an eigenvalue problem in $B$ with Dirichlet boundary conditions. Consequently, $k$ is necessarily the lowest positive solution of this equation, meaning that $k=k_\nu$. Hence $\Gamma(B)=k_\nu^4$.

We also invoke the article \cite[equation (2.2)]{baricz-ponnusamy-singh} according to which the positive zeros $\gamma_{\nu,n}$ of $f_{\nu}$ and the positive zeros $j_{\nu,n}$ of $J_\nu$ interlace in the following way
\begin{equation}\label{eq:entrelacement des zeros}
j_{\nu,n}<\gamma_{\nu,n}<j_{\nu,n+1}.
\end{equation}
In particular, $M\neq0$ and it has a one-dimensional kernel generated, in virtue of the identity $u(R)=0$, by the vector $(I_\nu(\gamma_\nu),-J_\nu(\gamma_\nu))$ or equivalently by the vector $R^\nu(J_\nu(\gamma_\nu)^{-1},-I_\nu(\gamma_\nu)^{-1})$. In other words, there exists a real number $\beta$ such that
$$
\left(\begin{array}{c}
a \\
b 
\end{array}\right)
=\beta R^\nu
\left(\begin{array}{c}
J_\nu(\gamma_\nu)^{-1}  \\
-I_\nu(\gamma_\nu)^{-1} 
\end{array}\right)$$
Finding the values of $a$ and $b$ is thus equivalent to determining $\beta$. For that purpose, we use the normalisation of $u$, i.e.
\begin{align}\label{eq:normalisation}
1=\int_B u^2=\beta^2R^{2\nu}|\mathbb{S}^{d-1}|&\left[J_\nu(\gamma_\nu)^{-2}\int_0^RJ_\nu(k r)^2r^{d-2\nu-1}\right.\nonumber\\
&\left.+I_\nu(\gamma_\nu)^{-2}\int_0^RI_\nu(k r)^2r^{d-2\nu-1}\right.\\
&\left.-2J_\nu(\gamma_\nu)^{-1}I_\nu(\gamma_\nu)^{-1}\int_0^RI_\nu(kr)J_\nu(kr)r^{d-2\nu-1}\right].\nonumber
\end{align}
As $d-2\nu-1=1$, it turns out that we need to compute the integral of product of Bessel functions against $r$. That's why we use the Gradshteyn and Ryzhik collection \cite[section 6.521, formula 1]{gradshteyn-ryzhik}, that is, for all $\alpha\neq\beta\in\mathbb{C}$ and $\nu>-1$,
\begin{equation}\label{eq:Gradshteyn 1}
\int_0^1xJ_\nu(\alpha x)J_\nu(\beta x)=\frac{\beta J_{\nu-1}(\beta)J_\nu(\alpha)-\alpha J_{\nu-1}(\alpha)J_\nu(\beta)}{\alpha^2-\beta^2}=\frac{\alpha J_{\nu+1}(\alpha)J_\nu(\beta)-\beta J_{\nu+1}(\beta)J_\nu(\alpha)}{\alpha^2-\beta^2}.
\end{equation}
We apply this formula with $\alpha=i\gamma_\nu$ and $\beta=\gamma_\nu$, and find
$$
\int_0^RI_\nu(k_\nu r)J_\nu(k_\nu r)r^{d-2\nu-1}=\frac{R^2}{2\gamma_\nu}[I_{\nu+1}(\gamma_\nu)J_\nu(\gamma_\nu)+J_{\nu+1}(\gamma_\nu)I_\nu(\gamma_\nu)]=\frac{R^2}{2\gamma_\nu}f_\nu(\gamma_\nu)=0.
$$
For the other integrals, we first remark that when ($\alpha,\beta\in\R$ and) $\alpha\to\beta$ in (\ref{eq:Gradshteyn 1}), one obtains
\begin{equation}\label{eq:Gradshteyn alpha2beta}
\int_0^1 xJ_\nu(\beta x)^2=\frac{J_\nu(\beta)^2}{2\beta}\frac{d}{d\beta}\left[\frac{\beta J_{\nu+1}(\beta)}{J_\nu(\beta)}\right]=\frac{1}{2}\left[J_{\nu+1}(\beta)^2+J_\nu(\beta)^2-\frac{\nu}{\beta} J_{\nu+1}(\beta)J_\nu(\beta)\right].
\end{equation}
Hence, with $\beta=\gamma_\nu$, we find
$$
\int_0^RJ_\nu(k_\nu r)^2r^{d-2\nu-1}=\frac{R^2}{2}\left[J_{\nu+1}(\gamma_\nu)^2+J_\nu(\gamma_\nu)^2-\frac{\nu}{\gamma_\nu} J_{\nu+1}(\gamma_\nu)J_\nu(\gamma_\nu)\right].
$$
But because both extremal members in (\ref{eq:Gradshteyn alpha2beta}) depend holomorphicly on $\beta$, this formula remains true even when $\beta\in\C$ thanks to the isolation of zeros, hence, we can apply it to $\beta=i\gamma_\nu$:
$$
\int_0^RI_\nu(k_\nu r)^2r^{d-2\nu-1}=\frac{R^2}{2}\left[-I_{\nu+1}(\gamma_\nu)^2+I_\nu(\gamma_\nu)^2-\frac{\nu}{\gamma_\nu} I_{\nu+1}(\gamma_\nu)I_\nu(\gamma_\nu)\right].
$$
Finally, since $f_\nu(\gamma_\nu)=0$, the term between the brackets in (\ref{eq:normalisation}) becomes
$$
\begin{array}{rcll}
& \frac{R^2}{2} & \left[J_\nu(\gamma_\nu)^{-2}\left(J_{\nu+1}(\gamma_\nu)^2+J_\nu(\gamma_\nu)^2-\frac{\nu}{\gamma_\nu}J_{\nu+1}(\gamma_\nu)J_\nu(\gamma_\nu)\right) \right.\\
& & \left. +I_\nu(\gamma_\nu)^{-2}\left(-I_{\nu+1}(\gamma_\nu)^2+I_\nu(\gamma_\nu)^2-\frac{\nu}{\gamma_\nu}I_{\nu+1}(\gamma_\nu)I_\nu(\gamma_\nu)\right)\right] \\
= & \frac{R^2}{2} & \left[2+\left(\frac{J_{\nu+1}}{J_\nu}(\gamma_\nu)-\frac{I_{\nu+1}}{I_\nu}(\gamma_\nu)-\frac{\nu}{\gamma_\nu}\right)\left(\frac{J_{\nu+1}}{J_\nu}(\gamma_\nu)+\frac{I_{\nu+1}}{I_\nu}(\gamma_\nu)\right)\right] \\
= & R^2 & .
\end{array}
$$
Using $|\mathbb{S}^{d-1}|R^d=d|B|$, we have that $\beta^{-2}=|\mathbb{S}^{d-1}|R^{2\nu+2}=d|B|$, hence
\begin{equation}\label{eq:A et B}
a=\frac{R^{\nu}}{J_\nu(\gamma_\nu)\sqrt{d|B|}},\qquad b=-\frac{R^{\nu}}{I_\nu(\gamma_\nu)\sqrt{d|B|}}.
\end{equation}
In particular,
$$
u(r)=\frac{1}{\sqrt{d|B|}}\left(\frac{J_\nu(k_\nu r)}{J_\nu(k_\nu R)}-\frac{I_\nu(k_\nu r)}{I_\nu(k_\nu R)}\right)\left(\frac{r}{R}\right)^{-\nu}.
$$
which corresponds to (\ref{eq:uB}). After having obtained the expression of $u$, we would like to compute its integral. Observing that
$$
\int_B u=\frac{1}{\Gamma(B)}\int_B\Delta^2 u=\frac{1}{\Gamma(B)}\int_{\partial B}\partial_n\Delta u=\frac{R^{d-1}|\mathbb{S}^{d-1}|}{k_\nu^4}\partial_r\Delta u(R),
$$
it remains only to compute
$$
\partial_r\Delta u(r)=[aJ_{\nu+1}(k_\nu r)+bI_{\nu+1}(k_\nu r)]k_\nu^3r^{-\nu},
$$
for which we used the identities $J_{\nu+1}'(x)=J_\nu(x)-\frac{\nu+1}{x}J_{\nu+1}(x)$ and $I_{\nu+1}'(x)=I_\nu(x)-\frac{\nu+1}{x}I_{\nu+1}(x)$. As a result, we get
\begin{align*}
\int_B u = & \frac{R^{d-1}|\mathbb{S}^{d-1}|}{k_\nu^4}\frac{k_\nu^3}{\sqrt{d|B|}}\left[\frac{J_{\nu+1}}{J_\nu}(\gamma_\nu)-\frac{I_{\nu+1}}{I_\nu}(\gamma_\nu)\right] \\
= & \frac{\sqrt{d|B|}}{\gamma_\nu}\left[\frac{J_{\nu+1}}{J_\nu}(\gamma_\nu)-\frac{I_{\nu+1}}{I_\nu}(\gamma_\nu)\right].
\end{align*}
Note that the last equality in (\ref{eq:int uB}) comes from the fact that $f_\nu(\gamma_\nu)=0$.
\end{proof}







\renewcommand{\abstractname}{Aknowledgements}
\begin{abstract}
\noindent I would like to thank Enea Parini and François Hamel for their valued support and useful comments during the elaboration of this document. Let me thank Davide Buoso profusely for useful discussions and for having brought to my attention several interesting references on shape derivatives.
\end{abstract}



\printbibliography

\clearpage


\end{document}